\newcommand{\B}[1]{{\mathbif #1}}
\newcommand{\C}[1]{{\mathcal #1}}
\newtheorem{theorem}{Theorem}[section]
\newtheorem{theorem*}{Theorem}
\newtheorem{lemma}[theorem]{Lemma}
\newtheorem{proposition}[theorem]{Proposition}
\newtheorem{corollary}[theorem]{Corollary}
\newtheorem*{question*}{Question}
\theoremstyle{definition}
\newtheorem*{remark*}{Remark}
\newtheorem*{remarks*}{Remarks}
\newtheorem*{corollary*}{Corollary}
\numberwithin{figure}{section}
\numberwithin{table}{section}
\numberwithin{equation}{section}
\def\B{\mathbf}
\newcommand{\OP}{\operatorname}
\begin{document}

\title{Entropy and quasimorphisms}
\author{Michael Brandenbursky}
\author{Micha\l\ Marcinkowski}
\address{Ben Gurion University of the Negev, Israel}
\email{brandens@math.bgu.ac.il}
\address{Regensburg Universit{\"a}t \& Uniwersytet Wroc\l awski}
\email{marcinkow@math.uni.wroc.pl}
%\thanks{thanks} 
\keywords{Area-preserving and Hamiltonian diffeomorphisms, topological entropy, quasimorphisms, braid groups, mapping class groups, conjugation-invariant norms}
\subjclass[2000]{Primary 53; Secondary 57}

\begin{abstract}
Let $S$ be a compact oriented surface. We construct homogeneous quasimorphisms 
on $\OP{Diff}(S, \OP{area})$, on $\OP{Diff}_0(S, \OP{area})$ and on $\OP{Ham}(S)$ 
generalizing the constructions of Gambaudo-Ghys and Polterovich.

We prove that there are infinitely many linearly independent 
homogeneous quasimorphisms on  $\OP{Diff}(S, \OP{area})$, on
$\OP{Diff}_0(S, \OP{area})$ and on $\OP{Ham}(S)$ 
whose absolute values bound from below the topological entropy. 
In case when $S$ has a positive genus, the quasimorphisms we construct
on $\OP{Ham}(S)$ are $C^0$-continuous.

We define a bi-invariant metric on these groups, called the entropy metric, 
and show that it is unbounded. In particular, we reprove the fact that the autonomous 
metric on $\OP{Ham}(S)$ is unbounded.
\end{abstract}

\maketitle

\tableofcontents

\section{Introduction}\label{S:intro} 
Let $M$ be a smooth compact manifold with some fixed
Riemannian metric. Let $f\colon M\to M$ be a continuous function. 
Recall that the topological entropy of $f$ may be defined as follows.
Let $\B d$ be the metric on $M$ induced by some Riemannian metric. For $p\in\B N$ define a new metric
$\B d_{f,p}$ on $M$ by 
$$\B d_{f,p}(x,y)=\max_{0\leq i\leq p} \B d(f^i(x),f^i(y)).$$
Let $M_f(p,\epsilon)$ be the minimal number of $\epsilon$-balls in the $\B d_{f,p}$-metric that cover $M$.
The topological entropy $h(f)$ is defined by
$$h(f)=\lim_{\epsilon\to 0}\limsup_{p\to\infty}\frac{\log{M_f(p,\epsilon)}}{p},$$
where the base of $\log$ is two.
It turns out that $h(f)$ does not depend on the choice 
of Riemannian metric, see \cite{Bowen, Dinaburg}.

It  is notoriously difficult to compute topological entropy of a given diffeomorphism, 
even to detect whether entropy of a given diffeomorphism 
is non zero is a difficult task in most cases.

We consider the case when $M$ is a compact connected 
oriented surface $S$ endowed with an area form. Let
$\OP{Diff}(S,\OP{area})$ and $\OP{Diff}_0(S,\OP{area})$ be groups, 
where the first group is the group of area-preserving diffeomorphisms of $S$, 
and the second group is the group of area-preserving 
diffeomorphisms of $S$ isotopic to the identity.
If $S$ has a boundary, then we assume that diffeomorphisms 
are identity in some fixed neighborhood of the boundary. 

In the first part of the paper we revise and extend the 
construction of quasimorphisms on 
$\OP{Diff_0}(S,\OP{area})$ given by Gambaudo-Ghys \cite{MR2104597} and Polterovich \cite{MR2276956}, see
also \cite{MR2851716, MR3391653}. 
The main advantage of our approach is that it allows to treat all surfaces  
in an unified way and to show there are infinitely many linearly independent homogeneous
quasimorphisms on $\OP{Diff}(S, \OP{area})$ whose restrictions on $\OP{Diff}_0(S, \OP{area})$
are linearly independent. 

In the second part of our work we show that there are infinitely many
linearly independent homogeneous quasimorphisms on 
$\OP{Diff}(S,\OP{area})$ and on $\OP{Diff}_0(S,\OP{area})$
whose absolute values bound from below the topological entropy.
The same holds for the group $\OP{Ham}(S)$ of Hamiltonian diffeomorphisms of $S$.
More precisely, we apply the construction described in Section \ref{S:qm} to
quasimorphisms on mapping class groups constructed by Bestvina and Fujiwara in \cite{MR1914565}. 
We prove that these quasimorphisms are Lipschitz with respect to the topological entropy.
Our work is inspired by the paper of Gambaudo and Pecou \cite{MR1695912}
who constructed a dynamical cocycle on the group $\OP{Diff}(D^2, \OP{area})$ 
which bounds from below the topological entropy.

Recall that a function $\psi$ from a group $G$ to the reals
is called a quasimorphism if there exists $D$
such that 
$$|\psi(a)-\psi(ab)+\psi(b)|<D$$ 
for all $a,b\in G$. 
Minimal such $D$ is called the defect of $\psi$ and denoted by $D_\psi$.
A quasimorphism $\psi$ is homogeneous if $\psi(a^n)=n\psi(a)$ 
for all $n\in\B Z$ and $a\in G$. 
Quasimorphism $\psi$ can be homogenized by setting
$$\overline{\psi} (a) := \lim_{p\to\infty} \frac{\psi (a^p)}{p}.$$
The vector space of homogeneous quasimorphisms on $G$ is denoted by $Q(G)$. 
For more information about quasimorphisms and their 
connections to different branches of mathematics, see \cite{Calegari}.
Throughout the paper we assume that the surface $S$ is always connected. 
Our main result is the following

\begin{theorem*}\label{T:main}
Let $S$ be a compact oriented Riemannian surface and let the group
$G=\OP{Diff}(S,\OP{area})$ or $G=\OP{Diff}_0(S,\OP{area})$ or $G=\OP{Ham}(S)$.
Then there exists an infinite dimensional subspace of $Q(G)$
such that every $\Psi$ in this subspace is Lipschitz with respect to the topological entropy, i.e., 
there exists a positive constant $C_{\Psi}$, which depends only on $\Psi$,  
such that for every $f\in G$ we have $$|\Psi(f)|\leq C_{\Psi}h(f).$$
\end{theorem*}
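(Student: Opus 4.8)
The plan is to build the required subspace by pulling back the Bestvina--Fujiwara quasimorphisms along the construction of Section~\ref{S:qm}. Recall that for each $n$ that construction assigns to a homogeneous quasimorphism $\psi$ on the mapping class group $\OP{MCG}(S,n)$ of $S$ with $n$ marked points a homogeneous quasimorphism $\Psi=\Psi_\psi$ on $G$, obtained by integrating $\psi(\gamma(f,\bar x))$ over the configuration space of $n$ points, where $\gamma(f,\bar x)$ is the mapping class braided out by the orbit of $\bar x$. Since $\OP{MCG}(S,n)$ contains pseudo-Anosov (hence WPD) elements once $n$ is large enough, Bestvina--Fujiwara produce an infinite-dimensional space of homogeneous quasimorphisms there, and by the linear-independence results already established the assignment $\psi\mapsto\Psi_\psi$ is injective on this space. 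Thus it suffices to prove the entropy bound $|\Psi_\psi(f)|\le C_\psi h(f)$ for a single Bestvina--Fujiwara quasimorphism $\psi$; the image of the whole BF subspace is then the desired infinite-dimensional subspace of $Q(G)$.

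For the estimate itself I would first use homogeneity to pass to high iterates, $\Psi_\psi(f)=\tfrac1p\Psi_\psi(f^p)$, so that
$$|\Psi_\psi(f)|\le \liminf_{p\to\infty}\frac1p\int_{\OP{Conf}_n(S)}\bigl|\psi\bigl(\gamma(f^p,\bar x)\bigr)\bigr|\,d\mu(\bar x).$$
By Fatou it is enough to bound, for $\mu$-almost every $\bar x$, the growth rate $\limsup_{p}\tfrac1p|\psi(\gamma(f^p,\bar x))|$ by $C_\psi\,h(f)$ and then integrate. The temptation is to use the universal inequality $|\psi(g)|\le D_\psi\cdot\|g\|$ by word length, but this is far too lossy, as an irrational rotation of the disc shows: it has zero entropy, yet its two-point braid grows linearly in word length. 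The decisive feature of the Bestvina--Fujiwara quasimorphisms is that they are governed by the action on the curve complex $\mathcal C(S,n)$ and are insensitive to the reducible and periodic part of a mapping class: $|\psi(g)|$ is controlled by $D_\psi$ times the asymptotic translation length of $g$ on $\mathcal C(S,n)$, which vanishes unless $g$ has a pseudo-Anosov component.

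The heart of the argument, and the step I expect to be hardest, is therefore to bound the loxodromic complexity of the braided mapping classes by the topological entropy. Concretely, one shows
$$\limsup_{p\to\infty}\frac1p\,\tau_{\mathcal C}\bigl(\gamma(f^p,\bar x)\bigr)\ \le\ \limsup_{p\to\infty}\frac1p\,\log\lambda\bigl(\gamma(f^p,\bar x)\bigr)\ \le\ h(f),$$
where $\lambda$ denotes the dilatation of the pseudo-Anosov piece. The second inequality is a Nielsen--Thurston forcing estimate of the kind established by Gambaudo--Pecou: the exponential growth rate of the dilatation of the braids $\gamma(f^p,\bar x)$ is dominated by $h(f)$, reflecting that any diffeomorphism exhibiting a given braid type must carry at least the entropy of the pseudo-Anosov representative of that type. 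The first inequality is the standard comparison between translation length on the curve complex and log-dilatation. Combining the three displays gives $|\Psi_\psi(f)|\le C_\psi h(f)$ with $C_\psi$ a multiple of $D_\psi\,\mu(\OP{Conf}_n(S))$. The delicate points to make uniform are the measurability and integrability needed for Fatou, the handling of reducible classes carrying several pseudo-Anosov components, and the control of the forcing inequality uniformly in $\bar x$; these, rather than any single hard inequality, are where the real work lies.
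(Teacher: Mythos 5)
Your reduction of the theorem to a pointwise almost-everywhere estimate, and your first inequality --- that a Bestvina--Fujiwara quasimorphism is controlled by the asymptotic translation length on the curve complex --- coincide exactly with the paper's route (this is Lemma \ref{L:tau-qm-inequality}), and the injectivity/infinite-dimensionality bookkeeping you invoke is Corollary \ref{C:BF-braids} (note that for $\OP{Diff}_0(S,\OP{area})$ and $\OP{Ham}(S)$ this needs the extra arguments given there and at the end of Section \ref{SS:T1}, not merely the existence of pseudo-Anosov elements). The genuine gap is in your second inequality, the ``forcing estimate'' $\limsup_p \frac{1}{p}\log\lambda(\gamma(f^p,x)) \le h(f)$. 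Nielsen--Thurston/Boyland forcing --- ``any diffeomorphism exhibiting a given braid type carries at least the entropy of the pseudo-Anosov representative of that type'' --- applies to the isotopy class of a map relative to an \emph{invariant} set, typically a periodic orbit. The mapping class $\gamma_{S,n}(f^p,x)=[h^{-1}_{f^p(x)}\circ f^p\circ h_x]$ is not of this kind: for almost every $x$ the configuration $x$ is not $f^p$-invariant, and the class is produced by artificially closing the orbit segment with the push maps $h_x$ and $h^{-1}_{f^p(x)}$. These closing maps genuinely change the mapping class (that is the whole point of the cocycle), so $\gamma_{S,n}(f^p,x)$ is not the braid type of any invariant set of $f^p$, and no off-the-shelf forcing theorem bounds its dilatation in terms of $p\,h(f)$. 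Gambaudo--Pecou \cite{MR1695912} do not prove such a statement by forcing either; their lower bound for entropy, like the paper's, is obtained geometrically.

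What the paper does instead, and what your outline is missing, is a mechanism to control the closing maps and to convert translation length into a quantity that entropy is known to dominate. Concretely: Proposition \ref{P:Tau-length-inequality} (proved via the intersection-number bounds of Lemmas \ref{L:dist-inequality} and \ref{L:int-number-inequality}) gives $\tau_{S'}(\gamma_{S,n}(f^p,x)) \le 2\log l\bigl(h^{-1}_{f^p(x)}\circ f^p\circ h_x(\alpha)\bigr)+B$ for a fixed essential curve $\alpha$; the Poincar\'e recurrence theorem is then used to choose, for almost every $x$, a subsequence of $p$'s along which $f^p(x)$ returns to a fixed neighborhood of $x$, so that $\|h^{-1}_{f^p(x)}\|_1$ stays bounded and the closing maps contribute nothing to the exponential growth rate; finally, Yomdin's theorem \cite{MR889979} bounds the growth rate of $\log l(f^p\circ h_x(\alpha))$ by $h(f)$ (this is where $C^\infty$ smoothness enters). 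If you try to repair your dilatation route, you will be forced into precisely these steps: bounding $\log\lambda$ (or $\tau$) of the closed-up class requires controlling the geometric effect of the closing, and the only available bridge from that geometry to $h(f)$ is the curve-length-growth inequality, not Nielsen--Thurston forcing.
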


Let $\OP{Ent}(S)\subset G$ be the set of entropy-zero diffeomorphisms.
This set is conjugation invariant and it generates $G$, see Lemma \ref{L:generation}.  
In other words, a diffeomorphism of $S$ is a finite product of entropy-zero diffeomorphisms. 
One may ask for a minimal decomposition and this question
leads to the concept of the entropy norm which we define by
$$
\|f\|_{\OP{Ent}}:=\min\{k\in\B N\,|\,f=h_1\cdots h_k,\,h_i\in\OP{Ent}(S)\}.
$$
It is the word norm associated with the generating set $\OP{Ent}(S)$.
This set is conjugation invariant, so is the entropy
norm. The associated bi-invariant metric is denoted by $\B d_{\OP{Ent}}$.
It follows from the work of Burago-Ivanov-Polterovich \cite{MR2509711}
and Tsuboi \cite{MR2523489,MR2874899} that for many manifolds all conjugation
invariant norms on $\OP{Diff}_0(M)$ are bounded. Hence the
entropy norm is bounded in those cases.

We show that the situation is different for $G$.
More precisely, as a corollary of our main result we obtain the following

\begin{theorem*}\label{T:unboundedness}
Let $S$ be a compact oriented Riemannian surface and let the group
$G=\OP{Diff_0}(S,\OP{area})$ or $G=\OP{Diff}(S,\OP{area})$ or $G=\OP{Ham}(S)$. 
Then the diameter of $(G, \B d_{\OP{Ent}})$ is infinite.
Moreover, in case when $S$ is a closed disc, 
for each $m\in\B N$ there exists a bi-Lipschitz injective homomorphism 
$$\B Z^m\hookrightarrow (G, \B d_{\OP{Ent}}),$$
where $\B Z^m$ is endowed with the $l^1$-metric.
\end{theorem*}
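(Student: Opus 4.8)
The plan is to invoke the standard duality between homogeneous quasimorphisms and conjugation-invariant word norms, feeding it the Lipschitz quasimorphisms produced by the Main Theorem. The key observation is that every $\Psi$ in the infinite-dimensional subspace furnished there vanishes on the generating set $\OP{Ent}(S)$: for any entropy-zero $h$ one has $|\Psi(h)| \le C_\Psi\, h(h) = 0$. Writing $f = h_1\cdots h_k$ with each $h_i \in \OP{Ent}(S)$ and $k = \|f\|_{\OP{Ent}}$, the defect inequality together with $\Psi(h_i)=0$ gives
$$|\Psi(f)| = \left|\Psi(h_1\cdots h_k) - \sum_{i=1}^{k}\Psi(h_i)\right| \le (k-1)\,D_\Psi,$$
so that $\|f\|_{\OP{Ent}} \ge 1 + |\Psi(f)|/D_\Psi$. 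That $\OP{Ent}(S)$ generates $G$, so $\|\cdot\|_{\OP{Ent}}$ is a genuine norm, is Lemma \ref{L:generation}.

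Infinite diameter then follows at once. The subspace being infinite-dimensional, it contains some $\Psi \ne 0$, and we may choose $f \in G$ with $\Psi(f) \ne 0$. Homogeneity gives $\Psi(f^n) = n\Psi(f)$, whence $\|f^n\|_{\OP{Ent}} \ge 1 + n|\Psi(f)|/D_\Psi \to \infty$.

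For the $\B Z^m$ statement on the disc I would use that a homogeneous quasimorphism is additive on commuting elements. The plan is to construct $m$ pairwise commuting $f_1,\dots,f_m \in G$ and $m$ quasimorphisms $\Psi_1,\dots,\Psi_m$ from the Lipschitz subspace whose pairing matrix $\big(\Psi_i(f_j)\big)$ is invertible; replacing the $\Psi_i$ by suitable linear combinations (legitimate, as the subspace is a linear space) we may assume $\Psi_i(f_j)=\delta_{ij}$. Define $\iota\colon\B Z^m\to G$ by $\iota(n_1,\dots,n_m)=f_1^{n_1}\cdots f_m^{n_m}$, a homomorphism since the $f_j$ commute. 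The Lipschitz upper bound is immediate from the triangle inequality and $\|f_j^{n_j}\|_{\OP{Ent}}\le |n_j|\,\|f_j\|_{\OP{Ent}}$ (recall $h(f)=h(f^{-1})$, so $\OP{Ent}(S)$ is closed under inverses). For the lower bound, additivity on commuting elements gives $\Psi_i(\iota(\mathbf{n}))=\sum_j n_j\,\Psi_i(f_j)=n_i$, so by the estimate above $\|\iota(\mathbf{n})\|_{\OP{Ent}}\ge \max_i |n_i|/D_{\Psi_i}\ge \frac{1}{m\max_i D_{\Psi_i}}\,\|\mathbf{n}\|_{l^1}$. The same identity $\Psi_i(\iota(\mathbf{n}))=n_i$ shows $\iota$ is injective, completing the bi-Lipschitz embedding.

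The main obstacle is the construction demanded in the previous paragraph: producing commuting $f_1,\dots,f_m$ that are simultaneously separated by quasimorphisms from the Lipschitz subspace. Here I would exploit the disc-specific link to braid groups underlying the construction of Section \ref{S:qm}, taking diffeomorphisms supported in pairwise disjoint sub-discs (so that they automatically commute) and selecting the Bestvina-Fujiwara quasimorphisms so that the $i$-th functional detects only the braiding generated by $f_i$. Checking that this makes the pairing matrix nondegenerate, while keeping the entropy norms $\|f_i\|_{\OP{Ent}}$ uniformly bounded, is the delicate step, and is exactly where the flexibility of the disc enters.
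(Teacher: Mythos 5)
Your first half (infinite diameter) is correct and is essentially the paper's own argument: entropy-zero diffeomorphisms generate $G$ (Lemma \ref{L:generation}), every $\Psi$ in the subspace from the Main Theorem vanishes on them, hence $|\Psi(f)|\leq D_\Psi\|f\|_{\OP{Ent}}$, and powers of any $f$ with $\Psi(f)\neq 0$ have linearly growing norm (this is Lemma \ref{L:unb-qm}). The second half, however, contains a genuine gap, and you flag it yourself: you never actually produce the commuting family $f_1,\dots,f_m$ together with quasimorphisms from the Lipschitz subspace satisfying $\Psi_i(f_j)=\delta_{ij}$; you only outline a plan and concede that verifying nondegeneracy of the pairing matrix is ``the delicate step.'' That construction \emph{is} the mathematical content of the $\B Z^m$ statement, so the proof is incomplete. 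Worse, your specific route --- diffeomorphisms supported in pairwise disjoint sub-discs plus Bestvina--Fujiwara classes chosen so that ``the $i$-th functional detects only the braiding generated by $f_i$'' --- hides a localization problem: the value of $\C G(\psi)$ on a diffeomorphism supported in a small disc is an integral over \emph{all} configurations $x\in X_n$, including those with points outside that disc, so a priori nothing guarantees the pairing can be computed, or even controlled, disc by disc.

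The paper resolves this by a different order of operations. It fixes a single small disc $D_r$ with $r<1/m$ and first gets a dual family \emph{without any commutativity requirement}: by \cite[Lemma 3.10]{MR3044593}, any infinite-dimensional space of homogeneous quasimorphisms on $\OP{Diff}(D_r,\OP{area})$ admits $\Psi_{i}$ and $f_{j}$ (all $f_j$ supported in $D_r$) with $\Psi_i(f_j)=\delta_{ij}$. The localization issue is isolated in Lemma \ref{L:Ext}, which shows $\C G_r=i_r^*\circ\C G$ on $Q_{\OP{BF}}(\B B_n)$: for configurations $x$ having a point outside $D_r$, the braid $\gamma(f^p;x)$ is a bounded perturbation of a braid lying in the reducible subgroup $inc(\B B_{n-1})$, and Bestvina--Fujiwara quasimorphisms vanish on reducible elements, so those configurations contribute nothing to the integral. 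Only then is commutativity arranged, and it comes for free: since $r<1/m$, one can conjugate, $\hat f_j=h_j\circ f_j\circ h_j^{-1}$, so that the $\hat f_j$ have pairwise disjoint supports, and conjugation-invariance of homogeneous quasimorphisms preserves the pairing $\Psi_i(\hat f_j)=\delta_{ij}$. To complete your proof you would either have to prove a localization lemma of the type of Lemma \ref{L:Ext} for your disjoint-discs setup, or adopt this two-step scheme (dual family in one small disc first, disjoint supports by conjugation afterwards), which never requires a quasimorphism to ``see'' only one disc.
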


\begin{remark*} There exists another conjugation invariant word norm on 
$\OP{Ham}(S)$, the autonomous norm.
It is unbounded in the case when $S$ is a compact oriented surface, 
see \cite{MR3391653, MR3044593, BKS, 1405.7931, MR2104597}.
Theorem \ref{T:unboundedness} together with the fact that 
every autonomous diffeomorphism of a surface has entropy zero, see \cite{MR0464325}, gives a new proof
of unboundedness of the autonomous norm on $\OP{Ham}(S)$.
\end{remark*}

\subsection* {Acknowledgments.}
We would like to thank Mladen Bestvina, Danny Calegari and Benson Farb for fruitful conversations. 
We would like also to thank Bernd Ammann and Ulrich Bunke for spotting an inaccuracy in the earlier proof of Lemma \ref{l:fin.many.braids}. 

Both authors were partially supported by GIF-Young Grant number I-2419-304.6/2016
and by SFB 1085 ``Higher Invariants'' funded by DFG. 
Part of this work has been done during the first author stay at the 
University of Regensburg and the second author stay
at the Ben Gurion University. We wish to express our gratitude 
to both places for the support and excellent working conditions.

\section{Quasimorphisms on diffeomorphisms groups of surfaces} \label{S:qm}

Let $S$ be a compact oriented surface endowed with an area form. In this section we define two linear maps
$$\C G_{S,n} \colon Q(\OP{MCG}(S,n)) \to Q(\OP{Diff}(S, \OP{area})),$$
$$\C G^0_{S,n} \colon Q(\OP{K}(S,n)) \to Q(\OP{Diff}_0(S, \OP{area})).$$
Here $\OP{MCG}(S,n)$ is the mapping class group of $S$ with $n$ punctures 
and $\OP{K}(S,n)$ is a certain subgroup of $\OP{MCG}(S,n)$.
If $S$ is a disk, then the group $\OP{K}(S,n)$ is isomorphic to the braid group on $n$ strands 
and $\C G^0_{S,n}$ is the map defined by Gambaudo-Ghys \cite{MR2851716, MR2104597}.
If $S$ is a closed surface of genus $g\geq 2$, 
then $\OP{K}(S,1)$ is isomorphic to the fundamental group of $S$ 
and $\C G^0_{S,1}$ is the map defined by Polterovich \cite{MR2276956}. 

The main difference in our approach is that we work with mapping class groups, instead of braid groups. 
Our definition of $\C G^0_{S,n}$ is isotopy free, which allows us to define 
$\C G^0_{S,n}$ in the case when $\pi_1(\OP{Diff}_0(S,\OP{area}))$ is non-trivial. 
It also allows us to extend the construction to the case of $\OP{Diff}(S,\OP{area})$.

The map $\C G_{S,n}$ is an extension of $\C G^0_{S,n}$ in a sense that we have the
following commutative diagram
$$
\begin{tikzcd}
Q(\OP{MCG}(S,n))\arrow{d}\arrow{r}{\C G_{S,n}}& Q(\OP{Diff}(S,\OP{area})) \arrow{d}\\
Q(\OP{K}(S,n))\arrow{r}{\C G^0_{S,n}}& Q(\OP{Diff}_0(S, \OP{area}))
\end{tikzcd}
$$
The vertical maps are restrictions to subgroups.
Many quasimorphisms we construct on $\OP{Diff}(S,\OP{area})$
restrict to non-trivial quasimorphisms on $\OP{Diff}_0(S,\OP{area})$.
In particular, they do not arise as a pull back of quasimorphims on $\OP{MCG}(S)$ by the
quotient map 
$$\OP{Diff}(S,\OP{area})\to\OP{MCG}(S).$$

In the remaining part of this section we define 
maps $\C G_{S,n}$, $\C G^0_{S,n}$ and discuss their basic properties.

\subsection{Configuration space}\label{sec:conf}

Let $D^2$ be an open disc in the Euclidean plane. 
Let $X_n$ be the configuration space of $n$ points in $D^2$. 
We fix a tuple $z~=~(z_1,\ldots,z_n)\in X_n$. 

Let $ev_{z}\colon \OP{Diff}(D^2)\to X_n$ be defined by $f\to f(z)=(f(z_1),\ldots,f(z_n)).$

It is shown in \cite[Section 3.2 and Theorem 4]{MR1695912}, 
that there is a subset $H_n$ of $X_n$ with the following properties: 
it is a union of submanifolds of codimension $1$, 
there exists a map $h\colon X_n\backslash H_n\to\OP{Diff}(D^2)$ 
which is a section of $ev_{z}$, i.e., $ev_{z}\circ h$ is the identity on $X_n\backslash H_n$.

Denote by $\Omega_n=X_n\backslash H_n$, and let $h$ be a section described as follows: 
let $x=(x_1,\ldots,x_n)\in\Omega_n$. 
Let $P_i$ be a geodesic segment connecting $z_i$ to $x_i$. 
The fact that $x\notin H_n$ guaranties, 
that $x_i$ and $z_i$ do not lie on $P_j$ for 
every $i$ and $j \neq i$ (for details see \cite[Section 3.2]{MR1695912}).
By $N_\epsilon(P_i)$ we denote the $\epsilon$-neighborhood of $P_i$. 
By the definition of $H_n$, we can pick a small $\epsilon(x)\in\B R_+$ 
such that $x_i,z_i\notin N_{\epsilon(x)}(P_j)$ for every $i$ and $j\neq i$. 
The choice of $\epsilon(x)$ can be made such that 
the function $\epsilon(x)$ is $C^1$-continuous on $\Omega_n$. 

Let $h_{x_i}\in\OP{Diff}(D^2)$ be a map which pushes $z_i$ to $x_i$ along $P_i$ 
and is supported on $N_{\epsilon(x)}(P_i)$.
We set $h_x=h_{x_1}\circ h_{x_2}\circ\ldots\circ h_{x_n}$. 
By the definition, $h_x$ maps $z_i$ to $x_i$. 
This is a $C^1$-continuous section of $ev_{z}$. 

\subsection{The cocycle}

Let $S$ be a compact oriented surface with an area form.
We take a map $j\colon D^2\to S$ which is an attachment of an open $2$-cell
to the $1$-skeleton of $S$. The image of $D^2$ is of full measure in $S$. 
In what follows we always regard $D^2$ as a subset of $S$.

The area form on $S$ induces the volume form on $X_n(S)$, 
which is the configuration space of $n$-points in $S$.
Spaces $X_n=X_n(D^2)$ and $\Omega_n$ are full measure subsets of $X_n(S)$.
The group $\OP{Diff}(S,\OP{area})$ acts on $X_n(S)$ preserving the measure. 
Let $\OP{MCG}(S,n)=\OP{MCG}(S,\{j(z_1),\ldots,j(z_n)\})$, 
where $\{z_i\}$ are defined in Section \ref{sec:conf}.
We define a cocycle
$$\gamma_{S,n}\colon\OP{Diff}(S,\OP{area})\times X_n\to\OP{MCG}(S,n)$$
by the formula 
$$\gamma_{S,n}(f,x)=[h^{-1}_{f(x)}\circ f\circ h_x].$$
To be fully correct the map $\gamma_{S,n}(f)\colon X_n\to X_n$ is not defined 
on $X_n$, but on a full measure subset of $X_n$ which depends on $f$,
namely on the set $\Omega_{n,f}~=~\Omega_n\cap f^{-1}(\Omega_n)$.
It is easy to show that $\gamma_{S,n}$ is a cocycle, i.e., 
$$\gamma_{S,n}(fg,x)=\gamma_{S,n}(f,g(x))\gamma_{S,n}(g,x).$$ 
 
Consider the forgetful map 
$$F\colon\OP{MCG}(S,n)\to\OP{MCG}(S)$$ 
and denote $\OP{K}(S,n)=Ker(F)$. If $f\in\OP{Diff}_0(S,\OP{area})$ then 
$$\gamma_{S,n}(f,x)=[h^{-1}_{f(x)}\circ f\circ h_{x}]$$ 
is homotopic to the identity in $S$, possibly by a homotopy which can move points $j(z_i)$. 
Thus $F(\gamma_{S,n}(f,x))=1$ and $\gamma_{S,n}(f,x)\in\OP{K}(S,n)$. 
It follows that we can restrict $\gamma_{S,n}$ 
to $\OP{Diff}_0(S,\OP{area})$ and obtain the cocycle: 
$$\gamma^0_{S,n}\colon\OP{Diff}_0(S,\OP{area})\times X_n\to\OP{K}(S,n).$$
In the same way $\gamma_{S,n}$ restricts to $\OP{Ham}(S)$.

\subsubsection{Relation to braids}\label{s:relation.to.braids}

Let us recall a construction due to Gambaudo and Ghys \cite[Section 5.2]{MR2104597}. 
With an isotopy $f_t$, $t \in [0,1]$ and a point $x = (x_1,\ldots,x_n) \in X_n(S)$, 
we associate a braid $\gamma'_{S,n}(f,x) \in \B B_n(S)$ in the following way: 
we connect $z$ with $x$ using geodesic segments as in Section \ref{sec:conf},
then we connect $x$ with $f(x)$ by $f_t(x)$, $t \in [0,1]$ and at the end we connect $f(x)$ again with $z$ using geodesic segments.

Let us now describe the relation between $\gamma'_{S,n}(f,x)$ and $\gamma_{S,n}(f,x)$. Recall the Birman map:
$$
Push \colon \B B_n(S) \to \OP{MCG}(S,n),
$$
where $\B B_n(S) = \pi_1(X_n(S),z)$ is the braid group of $S$ on $n$ strings and the definition of $Push$ is the following: 
let $\gamma(t)$, $t\in[0,1]$, be a loop in $X_n(S)$ based at $z$ and $\psi_t \in \OP{Diff}(S)$ an isotopy such that $\psi_t(z) = \gamma(t)$. 
Then $Push([\gamma]) = [\psi_1]$. From this description of $Push$ it is immediate that $Push(\gamma'_{S,n}(f,x)) = \gamma_{S,n}(f,x)$. 

\subsubsection{Finitely many mapping classes.}
We say that a function $\gamma$ defined on a probability space $X$ has essentially finite image, if there exists
a full measure subset of $X$ on which $\gamma$ has finite image.

\begin{lemma}\label{l:fin.many.braids}
For given $f\in\OP{Diff}(S,\OP{area})$, the map 
$$\gamma_{S,n}(f)\colon X_n\to\OP{MCG}(S,n)$$ 
has essentially finite image. 
\end{lemma}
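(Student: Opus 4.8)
The plan is to combine a local-constancy argument with a uniform complexity bound. First I would show that the map $x \mapsto \gamma_{S,n}(f,x)$ is locally constant on $\Omega_{n,f} = \Omega_n \cap f^{-1}(\Omega_n)$. Since the section $h$ is $C^1$-continuous on $\Omega_n$ and $f$ is fixed, the assignment $x \mapsto h_{f(x)}^{-1}\circ f \circ h_x$ is a continuous family of diffeomorphisms of $S$, each fixing the marked points $\{j(z_i)\}$ (indeed $h_x$ sends $z_i$ to $x_i$, then $f$ sends $x_i$ to $f(x_i)$, and $h_{f(x)}^{-1}$ sends $f(x_i)$ back to $z_i$). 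As $\OP{MCG}(S,n)$ is the set of connected components of this diffeomorphism group, hence discrete, the isotopy class is constant on each connected component of $\Omega_{n,f}$. Applying the forgetful map $F$ gives $F(\gamma_{S,n}(f,x))=[f]$, independent of $x$, because $h_x$ and $h_{f(x)}$ are supported in the disc and are therefore isotopically trivial in $S$. Thus all values lie in a single coset of $\OP{K}(S,n)\cong\B B_n(S)$, and it remains to bound the point-pushing (braid) part.

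For the finiteness I would bound geometric intersection numbers uniformly in $x$. Fix a finite filling system of arcs $a_1,\ldots,a_m$ based at the marked points $\{j(z_i)\}$ together with a fixed filling system of curves $b_1,\ldots,b_r$; a mapping class in $\OP{MCG}(S,n)$ lying in a fixed coset of $\OP{K}(S,n)$ is determined up to finite ambiguity by the intersection numbers $i(\gamma_{S,n}(f,x)(a_l),b_s)$. Writing $\gamma_{S,n}(f,x)(a_l)=h_{f(x)}^{-1}\big(f(h_x(a_l))\big)$, I would estimate each stage in turn: $h_x$ drags $a_l$ along the straight Euclidean geodesic segments $P_i$, and since two straight segments in the disc cross at most once, a fixed arc is pushed across a fixed curve only a bounded number of times; the fixed diffeomorphism $f$ converts intersection numbers with $b_s$ into intersection numbers with the fixed curves $f^{-1}(b_s)$; and $h_{f(x)}^{-1}$ again contributes only a bounded correction. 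Semialgebraicity of $H_n$ (straight-line geodesics and polynomial incidence conditions) guarantees that $\Omega_n$ has only finitely many connected components, which already confines the braid types of $h_x$ and of $h_{f(x)}$ to a finite set and is convenient for organizing the estimate.

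The main obstacle is making this intersection bound genuinely uniform over all of $\Omega_n$, including its frontier behavior: as $x$ approaches $H_n$ the neighborhood width $\epsilon(x)$ degenerates to $0$, and as $x$ approaches the frontier of configuration space the points collide or run into $\partial D^2$. I would argue that none of these degenerations increases the intersection count, since the push $h_x$ is always supported in a neighborhood of straight segments of length at most $\OP{diam}(D^2)$, so the dragged arc $h_x(a_l)$ meets a fixed curve $b_s$ at most a constant number of times, no matter how small $\epsilon(x)$ is; shrinking $\epsilon(x)$ merely confines the dragging to a thinner tube and cannot create new essential intersections. Establishing this uniform estimate is the delicate step, and precisely the point where the continuity and compactness bookkeeping must be handled with care; once it is in place, the uniformly bounded intersection numbers confine $\gamma_{S,n}(f,x)$ to finitely many mapping classes on a full-measure subset of $X_n$, which is the assertion of the lemma.
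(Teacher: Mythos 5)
Your opening moves are sound: local constancy of $x\mapsto\gamma_{S,n}(f,x)$ on $\Omega_{n,f}$ follows from the $C^1$-continuity of the section and the discreteness of $\OP{MCG}(S,n)$, and reducing finiteness to a uniform bound on essential intersection numbers with a filling system is a legitimate skeleton. But the two uniformity claims that carry the proof both fail. First, semialgebraicity gives finitely many components of $\Omega_n$, not of the actual domain of local constancy $\Omega_{n,f}=\Omega_n\cap f^{-1}(\Omega_n)$: since $f$ is an arbitrary smooth area-preserving diffeomorphism, $f^{-1}(H_n)$ is not semialgebraic, and $\Omega_{n,f}$ can have infinitely many connected components. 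Nor is the value determined by the pair of components of $\Omega_n$ containing $x$ and $f(x)$: along a path in $\Omega_n$ from $x$ to $x'$, the image path $f(x_t)$ may cross the walls $H_n$ arbitrarily many times, and at each crossing $h_{f(x_t)}$ --- hence $\gamma_{S,n}(f,x_t)$ --- jumps by a wall-crossing mapping class. So local constancy plus finiteness of components of $\Omega_n$ gives nothing by itself.

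Second, and more seriously, the intersection estimate cannot be made uniform by the mechanism you describe, because the arbitrary $f$ sits in the middle of $h_{f(x)}^{-1}\circ f\circ h_x$. After rewriting, what must be bounded is the intersection of $f(h_x(a_l))$ with $h_{f(x)}(b_s)$: the straight-line geometry you invoke controls $h_x$ in the source, but applying $f$ turns the tubes around the segments $P_i$ into neighborhoods of the curves $f(P_i)$, and for a fixed smooth (non-analytic) $f$ the number of crossings between $f$-images of straight segments and straight segments is not uniformly bounded over the family --- it can even be infinite for a single pair. Geometric counts only bound essential intersection numbers from above, so once they are unbounded your argument yields nothing; bounding the \emph{essential} intersection number directly, for an arbitrary smooth $f$, is essentially the content of the lemma itself, not ``bookkeeping.'' This is exactly the difficulty the paper circumvents: it uses the cocycle identity $\gamma_{S,n}(fg,x)=\gamma_{S,n}(f,g(x))\gamma_{S,n}(g,x)$ (Proposition \ref{prop:products}) to reduce the statement to a generating set of $\OP{Diff}(S,\OP{area})$ --- Morse autonomous diffeomorphisms, Hamiltonian pushes, and area-preserving Dehn twists (Lemma \ref{l:generators}) --- for which the associated braids or mapping classes admit explicit control (via \cite[Section 2.C]{MR3391653} and, for Dehn twists, the computation $\gamma_{S,n}(f,x)[f^{-1}]=Push(\delta_{S,n}(f,x))$), and then treats a surface with boundary by embedding it into a closed surface. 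Without such a reduction to special diffeomorphisms, or a genuinely new idea for bounding essential intersection numbers under an arbitrary smooth $f$, your proof does not go through.
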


Before the proof we need some preparations.
The following Proposition is an immediate consequence of the cocycle condition.

\begin{proposition}\label{prop:products} 
Let $f_i \in\OP{Diff}(S,\OP{area})$, $i = 1,\ldots,n$. Assume that functions $\gamma_{S,n}(f_i)$ have essentially finite images. 
Then $f_1f_2\ldots f_n$ has essentially finite image. 
\end{proposition}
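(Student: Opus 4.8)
The plan is to reduce the statement about the product $f_1 f_2 \cdots f_n$ to the individual hypotheses using the cocycle relation, which is the only structural fact available here. First I would write out the iterated cocycle identity. Applying the two-variable cocycle condition $\gamma_{S,n}(fg,x) = \gamma_{S,n}(f,g(x))\,\gamma_{S,n}(g,x)$ repeatedly, I obtain
\[
\gamma_{S,n}(f_1\cdots f_n, x) = \gamma_{S,n}(f_1, (f_2\cdots f_n)(x))\,\gamma_{S,n}(f_2,(f_3\cdots f_n)(x))\cdots\gamma_{S,n}(f_n,x).
\]
So the value of $\gamma_{S,n}$ on the product, evaluated at $x$, is a product of $n$ factors, where the $i$-th factor is the value of $\gamma_{S,n}(f_i)$ evaluated at the shifted point $(f_{i+1}\cdots f_n)(x)$.

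Next I would control each factor. By hypothesis each $\gamma_{S,n}(f_i)$ has essentially finite image, so there is a full-measure set $A_i \subseteq X_n$ on which $\gamma_{S,n}(f_i)$ takes only finitely many values in $\OP{MCG}(S,n)$. The $i$-th factor above is $\gamma_{S,n}(f_i)$ evaluated at $(f_{i+1}\cdots f_n)(x)$, so I want $(f_{i+1}\cdots f_n)(x) \in A_i$. Set
\[
B = \bigcap_{i=1}^{n} (f_{i+1}\cdots f_n)^{-1}(A_i),
\]
with the convention that the innermost product is empty for $i=n$ so the last condition is just $x \in A_n$. Because each $f_j$ is area-preserving (measure-preserving) and each $A_i$ has full measure, each preimage $(f_{i+1}\cdots f_n)^{-1}(A_i)$ has full measure, and a finite intersection of full-measure sets has full measure. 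Hence $B$ has full measure.

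On $B$, every factor in the product expansion lies in the corresponding finite image set, so $\gamma_{S,n}(f_1\cdots f_n, x)$ is a product of $n$ elements, the $i$-th chosen from a finite subset of $\OP{MCG}(S,n)$; there are only finitely many such ordered products, so $\gamma_{S,n}(f_1\cdots f_n)$ takes finitely many values on $B$. This shows $f_1\cdots f_n$ has essentially finite image, as claimed. I do not anticipate a serious obstacle here: the only point requiring care is the bookkeeping with the shifted evaluation points and the verification that the relevant full-measure sets are preserved under the (measure-preserving) diffeomorphisms, so that the finite intersection $B$ is still full measure. One should also note that $\gamma_{S,n}(f)$ is genuinely defined only on the full-measure set $\Omega_{n,f}$, so I would intersect $B$ with the appropriate full-measure domains of definition for each composition to keep every expression meaningful; this changes nothing since we again intersect finitely many full-measure sets.
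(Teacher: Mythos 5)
Your proof is correct and is exactly the argument the paper intends: the paper states this proposition as an immediate consequence of the cocycle condition, and your write-up simply fills in the details (iterating the cocycle identity, pulling back the full-measure sets $A_i$ under the measure-preserving maps, and intersecting finitely many full-measure sets, including the domains $\Omega_{n,f}$). No gaps.
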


To prove Lemma \ref{l:fin.many.braids} it is enough to prove it for some generating set of $\OP{Diff}(S,\OP{area})$. 
Let us consider the following three types of diffeomorphisms.

\begin{itemize}
\item Morse autonomous diffeomorphisms: 
let $H$ be a Morse function on $S$. 
A Morse autonomous diffeomorphism $f$ is the Hamiltonian  
diffeomorphism defined by $H$, i.e. $f$ is the time-one map of the flow $f_t$ given by a vector field $X_H$, 
where $X_H$ is defined by the equation $dH = \iota_{X_H}\OP{area}$. 
\item Hamiltonian pushes: 
let $\sigma$ be a simple loop in $S$ and let
$A$ be a tubular neighborhood of $\sigma$. 
A Hamiltonian push is an element in $\OP{Diff_0}(S,\OP{area})$ which is the identity on the complement of $A$ 
and when restricted to $A\cong [0,1] \times S^1$ it is a time-$t$ map for some $t \in \B R$ 
of a Hamiltonian defined by $H(s,\psi) = g(s)$ where 
$g$ is a monotone function such that $g(\delta)=0$ and $g(1-\delta)=1$ for all $\delta < \frac{1}{3}$. 
\item Area-preserving Dehn twists: Let $S^1 = \B R/\B Z$. 
The standard Dehn twist of the annulus $[0,1]\times S^1$ is given by $D(s,\psi)=(s,\psi+f(s))$,
where $f \colon [0,1] \to [0,1]$ is any smooth monotone function such that $f(0) = 0$ and $f(1) = 1$.
Note that $D$ preserves the Lebesgue measure on $[0,1]\times S^1$.
Let $\sigma$ be a simple loop in $S$ and let $A$ be a tubular neighbourhood of $\sigma$. 
An area-preserving Dehn twist associated to $\sigma$ is a map which is the identity on the complement of $A$ and on $A$ it is
the pull-back of $D$ by some area-preserving diffeomorphism between $A$ and $[0,1]\times S^1$.
\end{itemize}

%Let $\OP{Diff}^+(S,\OP{area})$ be a subgroup of orientation preserving elements of $\OP{Diff}(S,\OP{area}$.

\begin{lemma}\label{l:generators}
Let $S$ be a closed oriented surface. Then Morse autonomous diffeomorphisms, Hamiltonian pushes and area-preserving Dehn twists generate $\OP{Diff}(S,\OP{area})$.
\end{lemma}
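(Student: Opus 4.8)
The plan is to exploit the two short exact sequences that filter $\OP{Diff}(S,\OP{area})$ for a closed surface, matching each of the three families of generators to one layer of the filtration. Write $G=\OP{Diff}(S,\OP{area})$ and let $H\leq G$ be the subgroup generated by Morse autonomous diffeomorphisms, Hamiltonian pushes and area-preserving Dehn twists. Recall the extensions
$$1\to\OP{Diff}_0(S,\OP{area})\to G\to\OP{MCG}(S)\to 1$$
and
$$1\to\OP{Ham}(S)\to\OP{Diff}_0(S,\OP{area})\xrightarrow{\OP{Flux}}H^1(S;\B R)/\Gamma\to 1,$$
where $\Gamma$ is the flux subgroup. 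I will show that $H$ surjects onto $\OP{MCG}(S)$ and that $H$ contains $\OP{Diff}_0(S,\OP{area})$. A short diagram chase then gives $H=G$: any $g\in G$ has the same image in $\OP{MCG}(S)$ as some $h\in H$, whence $gh^{-1}\in\OP{Diff}_0(S,\OP{area})\subseteq H$ and so $g\in H$.

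For the top layer, the image of an area-preserving Dehn twist in $\OP{MCG}(S)$ is the corresponding topological Dehn twist; since Dehn twists generate $\OP{MCG}(S)$ by the Dehn--Lickorish theorem, $H$ surjects onto $\OP{MCG}(S)$. For the flux layer I would compute the flux of a Hamiltonian push $p$ supported on a tubular neighbourhood $A\cong[0,1]\times S^1$ of a simple loop $\sigma$: pairing the flux class against a loop crossing $A$ once yields a nonzero multiple of the Poincar\'e dual of $\sigma$, while letting the time parameter $t$ range over $\B R$ scales this value continuously. Choosing loops $\sigma$ dual to a basis of $H_1(S;\B Z)$, the pushes realise every class in $H^1(S;\B R)$, hence surject onto $H^1(S;\B R)/\Gamma$. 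Consequently $H\cap\OP{Diff}_0(S,\OP{area})$ surjects onto the flux quotient.

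It remains to show that Morse autonomous diffeomorphisms generate $\OP{Ham}(S)$; granting this, $\OP{Ham}(S)\subseteq H$, and since $H\cap\OP{Diff}_0(S,\OP{area})$ already surjects onto $H^1(S;\B R)/\Gamma$ and contains the kernel $\OP{Ham}(S)$, we get $\OP{Diff}_0(S,\OP{area})\subseteq H$ as required. The strategy here is fragmentation: any $\phi\in\OP{Ham}(S)$ is a finite product of Hamiltonian diffeomorphisms each supported in an embedded disc, and a disc-supported Hamiltonian diffeomorphism can be written as a product of autonomous ones. The delicate point, and the main obstacle, is the Morse requirement: the generators must be time-one maps of \emph{Morse} Hamiltonians, and because generation is an exact algebraic statement one cannot merely $C^\infty$-approximate a smooth Hamiltonian by Morse ones and pass to a limit. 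I expect to resolve this by exploiting the freedom in the disc-level factorisation: the autonomous factors are constructed with continuous parameters, and since Morse functions form an open dense set one can arrange all the generating Hamiltonians to have only nondegenerate critical points while keeping the factorisation exact. Carrying out this genericity argument compatibly with the fragmentation — so that the Morse condition can be imposed on every factor simultaneously without altering the product — is where the real work lies.
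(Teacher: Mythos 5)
Your two-exact-sequence skeleton and your handling of the top two layers --- area-preserving Dehn twists surjecting onto $\OP{MCG}(S)$, Hamiltonian pushes realising every class in $H^1(S,\B R)/\Gamma$ --- coincide with the paper's proof. The gap is exactly where you admit ``the real work lies'': showing that Morse autonomous diffeomorphisms generate $\OP{Ham}(S)$. Your fragmentation-plus-genericity plan cannot be completed in the form you describe, for two reasons. First, generation is an exact identity, and replacing a Hamiltonian $H$ by a $C^\infty$-close Morse function $H_\epsilon$ changes its time-one map; the error $\phi^1_H\circ(\phi^1_{H_\epsilon})^{-1}$ is a non-autonomous Hamiltonian diffeomorphism, and absorbing it ``without altering the product'' is not a genericity statement --- it is precisely the generation statement you are trying to prove. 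Second, and more fatally, the factors produced by fragmentation are supported in embedded discs $D_i\subset S$, so their generating Hamiltonians are constant on the open set $S\setminus D_i$; such a function is never Morse, since every point of $S\setminus D_i$ is a degenerate critical point. So the Morse condition cannot be imposed on disc-supported factors by any perturbation, however cleverly you use the freedom in the factorisation; openness and density of Morse functions is simply irrelevant here.

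The paper avoids all of this with a soft argument worth keeping in your toolbox: the set of Morse autonomous diffeomorphisms is a conjugation-invariant subset of $\OP{Ham}(S)$, because for any area-preserving $g$ one has $g\circ\phi^1_H\circ g^{-1}=\phi^1_{H\circ g^{-1}}$, and $H\circ g^{-1}$ is again Morse. Hence the subgroup these elements generate is a nontrivial normal subgroup of $\OP{Ham}(S)$, and since $\OP{Ham}(S)$ is simple for a closed surface (Banyaga), that subgroup is all of $\OP{Ham}(S)$ --- no fragmentation and no perturbation needed. With this replacement for the bottom layer, your flux and mapping-class-group steps go through verbatim and give the lemma.
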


\begin{proof}
Note that the set of Morse autonomous diffeomorphisms is a conjugacy invariant subset of $\OP{Ham}(S)$. 
If follows from the simplicity of $\OP{Ham}(S)$ that this set generates $\OP{Ham}(S)$. 
Now consider the flux homomorphism
$$Flux \colon \OP{Diff_0}(S,\OP{area}) \to H^1(S,\B R)/\Gamma.$$
It is known that $Ker(Flux) = \OP{Ham}(S)$ and for every $c \in H^1(S,\B R)/\Gamma$ one can find
a product of Hamiltonian pushes $p$ such that $Flux(p) = c$. Thus Morse autonomous diffeomorphisms 
and Hamiltonian pushes generate $\OP{Diff}_0(S,\OP{area})$.
Recall that 
$$\OP{MCG}(S) = \OP{Diff}(S,\OP{area})/\OP{Diff}_0(S,\OP{area}).$$
Now the Lemma follows from the fact that $\OP{MCG}(S)$ is generated by mapping classes of area-preserving Dehn twists. 
\end{proof}

\begin{proof}[Proof of Lemma \ref{l:fin.many.braids}]
First we consider the case when $S$ is a closed oriented surface. 
It follows from Lemma \ref{prop:products} that it is enough to prove the statement for 
Morse autonomous diffeomorphisms, Hamiltonian pushes and area-preserving Dehn twists.

Let $f$ be a Morse autonomous diffeomorphism. There exists a full measure subset $X_n^0(S) < X_n(S)$ where
the set of braids associated to $f$ is finite, see \cite[Section 2.C]{MR3391653}. It means that
the set $\{ \gamma'_{S,n}(f,x)~|~x\in X_n^0(S)\}$ is finite.

The same analysis as in \cite[Section 2.C]{MR3391653} is applied to Hamiltonian pushes.  

In the case of area-preserving Dehn twists we proceed as follows.
Let $f$ be a Dehn twist supported in annulus $A \subset S$. 
We can assume that $z_i \notin A$ for all $i=1,\ldots,n$. 
Thus $[f] \in \OP{MCG}(S,n)$ and  
$$\gamma_{S,n}(f,x)[f^{-1}] = [h^{-1}_{f(x)}\circ f \circ h_x \circ f^{-1}] \in \OP{MCG}(S,n).$$
Since $h_x$ and $h_{f(x)}$ are isotopic to the identity, this implies that 
$$\gamma_{S,n}(f,x)[f^{-1}] \in im(Push) = K(S,n).$$ 
Let $x=(x_1,\ldots,x_n) \in X_n(S)$ and $P_{z_i,x_i}$ an interval connecting $z_i$ with $x_i$ as in Section \ref{sec:conf}. 
Note that $f \circ h_x \circ f^{-1}$ is a diffeomorphism which pushes $z_i$ to $f(x_i)$ along the curve $f(P_{z_i,x_i})$.
Let $\delta_{S,n}(f,x)$ be a braid described as follows: first we connect $z_i$ with $f(x_i)$ by curves $f(P_{z_i,x_i})$ 
and then we connect $f(x_i)$ with $z_i$ by $P_{z_i,f(x_i)}$. 
It follows from the definition of $Push$ in Subsection \ref{s:relation.to.braids}, that $Push(\delta_{S,n}(f,x)) = \gamma_{S,n}(f,x)[f^{-1}]$. 
Now the same analysis as in \cite[Section 2.C]{MR3391653} shows that there are finitely many braids of the form $\delta_{S,n}(f,x)$, 
and the proof follows for closed $S$.

Assume that $S$ has a boundary. 
In this case we embed $S$ into a closed surface $\overline{S}$ such that $i \colon \OP{MCG}(S,n) \to \OP{MCG}(\overline{S},n)$
is an embedding. For example, one can cap each boundary component of $S$ with a torus with one boundary component.  
We extend the area form from $S$ to $\overline{S}$.
It is possible to define the geodesic segments $P_i$ for $\overline{S}$ such that they 
agree with the geodesic segments defined for $S$ (see Section \ref{sec:conf}).
Now for $x \in S$ and $f \in \OP{Diff}(S,\OP{area})$ we have that 
$i \circ \gamma_{S,n}(f,x) = \gamma_{\overline{S},n}(\bar f,x)$, where $\bar f$ is the extension of $f$
to $\OP{Diff}(\overline{S},\OP{area})$ by the identity.  
\end{proof}
  
\subsection{Definition of $\C G_{S,n}$ and $\C G^0_{S,n}$}
Let $\psi\in Q(\OP{MCG}(S,n))$ and a diffeomorphism $f$ in $\OP{Diff}(S,\OP{area})$. 
By Lemma \ref{l:fin.many.braids} the function $x\to\psi\circ\gamma_{S,n}(f,x)$ is integrable. 
We define
$$\C G'_{S,n}(\psi)(f)=\int_{X_n}\psi\circ\gamma_{S,n}(f,x) dx\thinspace.$$

\begin{lemma}
The function $\C G'_{S,n}(\psi)$ is a quasimorphism.
\end{lemma}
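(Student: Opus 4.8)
The plan is to verify directly that $\mathcal G'_{S,n}(\psi)$ satisfies the quasimorphism inequality by exploiting the cocycle property of $\gamma_{S,n}$ together with the fact that $\psi$ is itself a quasimorphism on $\OP{MCG}(S,n)$. First I would write down, for $f,g\in\OP{Diff}(S,\OP{area})$, the cocycle relation $\gamma_{S,n}(fg,x)=\gamma_{S,n}(f,g(x))\,\gamma_{S,n}(g,x)$, which holds for all $x$ in the full-measure set where everything is defined. Applying $\psi$ and using its defect $D_\psi$, I obtain the pointwise estimate
$$
\bigl|\psi\circ\gamma_{S,n}(fg,x)-\psi\circ\gamma_{S,n}(f,g(x))-\psi\circ\gamma_{S,n}(g,x)\bigr|\leq D_\psi
$$
for almost every $x$.

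Next I would integrate this inequality over $X_n$ with respect to the normalized measure $dx$. The term $\int_{X_n}\psi\circ\gamma_{S,n}(fg,x)\,dx$ is exactly $\mathcal G'_{S,n}(\psi)(fg)$, and $\int_{X_n}\psi\circ\gamma_{S,n}(g,x)\,dx$ is $\mathcal G'_{S,n}(\psi)(g)$. The crucial point is the middle term: I claim $\int_{X_n}\psi\circ\gamma_{S,n}(f,g(x))\,dx=\mathcal G'_{S,n}(\psi)(f)$. This follows from the change of variables $y=g(x)$ together with the fact, stated in Section~\ref{sec:conf}, that $g\in\OP{Diff}(S,\OP{area})$ acts on $X_n(S)$ preserving the measure. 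Thus the pushforward of $dx$ under $g$ is again $dx$, and the integral is unchanged. Integrating the pointwise bound and applying the triangle inequality for integrals then yields
$$
\bigl|\mathcal G'_{S,n}(\psi)(fg)-\mathcal G'_{S,n}(\psi)(f)-\mathcal G'_{S,n}(\psi)(g)\bigr|\leq D_\psi,
$$
so $\mathcal G'_{S,n}(\psi)$ is a quasimorphism with defect at most $D_\psi$.

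The two points requiring care are the integrability and the measure-invariance step. Integrability is already guaranteed by Lemma~\ref{l:fin.many.braids}, since $\gamma_{S,n}(f)$ takes only finitely many values on a full-measure set, so $\psi\circ\gamma_{S,n}(f)$ is a bounded measurable function and in particular integrable; the same applies to $g$ and to $fg$. The main obstacle is the measure-preserving change of variables, and here one must be slightly careful that the domains of definition match up: $\gamma_{S,n}(f,\cdot)$ is defined on $\Omega_{n,f}$ and $\gamma_{S,n}(g,\cdot)$ on $\Omega_{n,g}$, while $\gamma_{S,n}(fg,\cdot)$ is defined on $\Omega_{n,fg}$. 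I would intersect these with the appropriate preimages under $g$ to land on a common full-measure set on which the cocycle identity holds and on which all three integrands agree with their extensions, so that discarding a null set does not affect any of the integrals. Since $g$ preserves measure, $g^{-1}$ of a full-measure set is again full measure, and all the relevant null sets can be absorbed without changing the integrals, completing the argument.
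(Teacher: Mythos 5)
Your proof is correct and takes essentially the same route as the paper: the cocycle identity, the pointwise defect bound for $\psi$, and the measure-preserving change of variables $y=g(x)$ to identify $\int_{X_n}\psi\circ\gamma_{S,n}(f,g(x))\,dx$ with $\C G'_{S,n}(\psi)(f)$, with integrability supplied by Lemma \ref{l:fin.many.braids}. The only cosmetic difference is normalization: the paper integrates against the unnormalized measure and so records the defect as $\OP{Area}(S)D_\psi$ rather than $D_\psi$, which does not affect the conclusion.
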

\begin{proof}
\begin{align*}
\C G'_{S,n}(\psi)(fg) &= \int_{X_n}\psi\circ\gamma_{S,n}(fg,x) dx\\
&= \int_{X_n}\psi(\gamma_{S,n}(f,g(x))\gamma_{S,n}(g,x)) dx\\
&\leq \int_{X_n}\psi\circ\gamma_{S,n}(f,g(x))+\psi\circ\gamma_{S,n}(g,x)+D_{\psi} dx\\
&=\C G'_{S,n}(\psi)(f)+\C G'_{S,n}(\psi)(g)+\OP{Area}(S)D_{\psi}\thinspace.
\end{align*}
In the last equality we used the fact that $g$ preserves the measure, 
thus $\C G'_{S,n}(\psi)(f)=\int_{X_n}\psi\circ\gamma_{S,n}(f,g(x))dx$.
In a similar way one shows that 
$\C G'_{S,n}(\psi)(fg)\geq \C G'_{S,n}(f)+\C G'_{S,n}(g)-\OP{Area}(S)D_{\psi}$.
\end{proof}

We define $\C G_{S,n}(\psi)$ to be the stabilization of $\C G'_{S,n}(\psi)$, i.e.,
$$\C G_{S,n}(\psi)(f)=\lim_{p\to\infty}\frac{\C G'_{S,n}(\psi)(f^p)}{p}\thinspace,$$
$$\C G_{S,n}\colon Q(\OP{MCG}(S,n))\to Q(\OP{Diff}(S,\OP{area})).$$

The map $\C G^0_{S,n}$ is defined in the same way as $\C G_{S,n}$, 
except that now instead of $\gamma_{S,n}$ we use $\gamma^0_{S,n}$.
In this situation we obtain a linear map
$$\C G^0_{S,n} \colon Q(\OP{K}(S,n))\to Q(\OP{Diff}_0(S,\OP{area})).$$
It is also defined from $Q(\OP{K}(S,n))$ to $Q(\OP{Ham}(S))$.

\subsection{Embedding Theorem}
The proof of the following theorem is a variation  
of the proof of Ishida \cite{MR3181631}, see also \cite{MR3391653, BKS}.
We present it for the reader convenience.

\begin{theorem}\label{T:embed}
$\C G_{S,n}$ and $\C G^0_{S,n}$ are injective.
\end{theorem}
\begin{proof}
We give a proof in the case of $\C G_{S,n}$. 
The argument for the injectivity of $G^0_{S,n}$ goes along the same lines. 
Let $\psi\in Q(\OP{MCG}(S,n))$ and let $\gamma$ in $\OP{MCG}(S,n)$ such that $\psi(\gamma)\neq 0$.
Dehn twists generate $\OP{MCG}(S,n)$, thus we can express $\gamma$ as a product 
of Dehn twists along some finite set of simple loops $\C C$. 
We assume that $z_i$ does not lie on any loop in $\C C$. 
Let $N$ be a small tubular neighborhood of loops in $\C C$ such that $z_i\notin N$. 
We choose $f$ such that $[f]=\gamma$ and $f$ is supported in $N$. 
The idea of the proof is to show that we can choose $N$ in a way that $\C G_{S,n}(\psi)(f) \neq 0$. 
In what follows we will split $X_n$ into two pieces: one which has a small volume, 
and one on which $f$ is the identity. This allows us to control the value of the integral.  

\textbf{Step 1.} 
By definition $f$ is the identity on $D^2 \backslash N$. 
Let $X_N$ be a set of tuples in $X_n$ which have at least one coordinate in $N$.
Then the volume of $X_N$ goes to zero when the area of $N$ goes to zero. 
Let $\gamma=\gamma_{S,n}$ and $D_f$ be a full measure subset of $X_n$ on which $\gamma(f)$ has finite image.
Let
$$C=\sup\{|\psi(\gamma(f,x))|\phantom{.}|\phantom{.} x\in D_f\}.$$ 
For all $x$ in certain full measure subset of $X_N$ we have:
\begin{align*}
\frac{\psi(\gamma(f^p,x))}{p}=\frac{\psi(\gamma(f,f^{p-1}(x))\ldots \gamma(f,x))}{p}\leq\frac{pC + pD_{\psi}}{p}=C + D_\psi. 
\end{align*}
Thus
$$A_N=\int_{X_N} \lim_{p\to\infty}\frac{\psi(\gamma(f^p,x))}{p}dx\leq \OP{vol}(X_N)(C+D_\psi).$$

\textbf{Step 2.} Let $U=D^2\backslash N$ and consider $U^n\subset X_n$. 
Note that $X_n=U^n\cup X_N$ and $f$ acts identically on $U^n$.
The set $U$ is open and has finitely many connected components. Denote them by $U_1,\ldots,U_k$. 
Let $x \in U^n$. By $c_j(x)$ we denote the number of coordinates of $x$ which belong to $U_j$.
The following two claims show that  $\psi(\gamma(f,x))$ depends only on the numbers $\{c_j(x)\}_{j=1}^k$.

\textbf{Claim 1.} Let $x,y \in U^n$. Assume that $x_i = y_{\sigma(i)}$ 
for some permutation $\sigma \in \OP{Sym}_n$ and $i = 1,\ldots,n$.
Then $\gamma(f,x)$ and $\gamma(f,y)$ are conjugated in $\OP{MCG}(S,n)$.

\begin{proof}
Consider a map $h^{-1}_xh_y$. This map permutes the points $z_i$, thus $[h^{-1}_xh_y] \in \OP{MCG}(S,n)$.
Then $[h^{-1}_xh_y]\gamma(f,y)[h_y^{-1}h_x] = \gamma(f,x)$.
\end{proof}

\textbf{Claim 2.} Let $x,y\in U^n$ such that $x_i, y_i$ belong to the 
same connected component of $U$ for $i=1,\ldots,n$. 
Then $\gamma(f,x)$ and $\gamma(f,y)$ are conjugated in $\OP{MCG}(S,n)$.

\begin{proof}
Let $g$ be a diffeomorphism of $S$ such that $g(x_i) = y_i$ and $g$ is supported on $U$. 
In particular $g$ can be taken to be a map which pushes $x_i$ towards $y_i$ 
and $x_i$ travels all the time in the same connected component of $U$.
We consider the mapping class $[h^{-1}_y\circ g\circ h_x]\in\OP{MCG}(S,n)$. 
The maps $g$ and $f$ have disjoint supports, hence they commute.
Then 
$$[h^{-1}_x\circ g^{-1}\circ h_y]\gamma(f,y)[h^{-1}_y\circ g\circ h_x]
=[h^{-1}_x\circ g^{-1}\circ f\circ g\circ h_x]=\gamma(f,x).$$
\end{proof}

The set $U^n$ splits into finitely many connected components 
of the form $U_s=U_{s(1)}\times\ldots\times U_{s(n)}$, where
$s\colon \{1,\cdots,n\} \to \{1,\cdots,k\}$.
Let $C=(c_1,\ldots,c_k)$ be a partition such that $c_1+\ldots+c_k=n$. 
Consider a component $U_s=U_{s(1)}\times \ldots \times U_{s(n)}$ of $U^n$
for which $c_j=\#s^{-1}(j)$. We say that $U_s$ is associated to $C$. 
The function $\psi(\gamma(f))$ is constant on connected components associated to $C$,
and on each component it has the same value. Denote it by $\psi(\gamma(f,C))$. 
Let $L(C)$ be the number of connected components associated to $C$. 
Every connected component associated to $C$ has the same volume 
$\OP{vol}(C)=\OP{area}(U_1)^{c_1}\ldots\OP{area}(U_k)^{c_k}$.

Recall that $\gamma(f,z)=[f]$ and $\psi([f])\neq 0$. 
Let $C_0=(c_1,\ldots,c_k)$ be the partition corresponding to $z$, that is $c_i$ is the number 
of coordinates of $z$ which lie in $U_i$. Then $\psi(\gamma(f,C_0))=\psi([f])\neq 0$. 
Since $f$ is the identity on $U$, we have $\gamma(f^p,x)=\gamma(f,x)^p$.
Now we compute:
\begin{align*}
B_N&=\int_{U^n}\lim_{p\to\infty}\frac{\psi(\gamma(f^p,x))}{p}dx=\int_{U^n}\psi(\gamma(f,x))dx\\
&=\sum_{C\colon c_1+\ldots+c_k=n}\psi(\gamma(f,C)) L(C)\OP{vol}(C).
 \end{align*}

If we treat $\OP{area}(U_i)$ as a free variable, 
this integral is a homogeneous polynomial in $k$ variables of degree $n$.
Denote this polynomial by $P$. The coefficient of the monomial $\OP{vol}(C_0)$ 
is $\psi(\gamma(f,C_0))L(C_0)$ and is non-zero. 
The $\OP{area}(U_i)$ depends on the neighborhood $N$. 
If we start shrinking $N$ such that $\OP{area}(N)$ converges to zero, 
then $\OP{area}(U_i)$ converges to some number $V_i$. 
We can assume that $P(V_1,\ldots,V_k)\neq 0$.
Indeed, if it is not the case, we can modify a little the loops $\C C$ which were 
chosen to express $[f]$ in terms of Dehn twists. 
Then the values of $V_i$ change freely, except that we 
have a constraint $V_1+\ldots+V_k=\OP{area}(S)$.
It is easy to see, that a homogeneous polynomial $P$ is 
non-trivial on every affine non-linear subspace of codimension one, 
so we can arrange $V_i$ such that $P(V_1,\ldots,V_k)\neq 0$.

Now we shrink $N$, then $\OP{area}(N)\to 0$ and $\OP{area}(U_i)\to V_i$. 
We have that $A_N\to 0$ and $B_N\to P(V_1,\ldots,V_k)\neq 0$. 
Since $\C G_{S,n}(\psi)(f)=A_N+B_N$, then for some $N$ we have $\C G_{S,n}(\psi)(f)\neq 0$.
\end{proof}

\section{Curve complex}\label{S:curve-complex}
Let $S$ be a connected oriented surface (possibly with boundary and punctures). 
A simple closed curve is called essential if it is not isotopic to a boundary curve,
not isotopic to a curve going around exactly one puncture,
and it is not isotopic to a point.

The curve complex $\C C(S)$ of $S$ was first defined by Harvey \cite{MR624817}. This simplicial
complex is defined as follows: for vertices we take isotopy classes of essential
simple closed curves in $S$. A collection of $k+1$ vertices $\{\alpha_i\}_{i=1}^k$
form a $k$-simplex whenever this collection can be realized by pairwise disjoint closed curves in $S$.
A celebrated result of Masur-Minsky states that $\C C(S)$ is hyperbolic \cite{MR1714338}.
We write $\B d_{\C C(S)}$ for the induced combinatorial path-metric on
$\C C(S)$ which assigns unit length to each edge of $\C C(S)$.

The intersection number $\iota_S(\alpha,\beta)$ between two simple closed curves $\alpha,\beta$
on $S$ is defined to be the minimal number of geometric 
intersections between $\alpha'$ and $\beta'$ where $\alpha'$ is 
isotopic to $\alpha$ and $\beta'$ is isotopic to $\beta$. 
Recall that a surface $S$ of genus $g$ with $k$ boundary components
and $n$ punctures is called non-sporadic if $3g+n+k-4>0$. 
Proof of the following lemma may be found in \cite{schleimer}.

\begin{lemma}\label{L:dist-inequality}
Let $S$ be a non-sporadic surface. Then for all simple closed curves 
$\alpha,\beta$ with $\iota_S(\alpha,\beta)\neq 0$ we have
$$\B d_{\C C(S)}(\alpha,\beta)\leq 2\log{\iota_S(\alpha,\beta)}+2.$$
\end{lemma}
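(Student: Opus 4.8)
\emph{Proof sketch (proposal).}
The plan is to prove the inequality by an iterated surgery argument that builds a short path in $\C C(S)$ from $\alpha$ to $\beta$, the length being governed by how fast surgery drives the intersection number with $\beta$ down to at most one. First I would put $\alpha$ and $\beta$ in minimal position, so that $\iota_S(\alpha,\beta)$ is realized by the actual number of transverse intersection points and there are no bigons; all surgeries below take place in this position. The technical heart is a halving step: if $\gamma$ is an essential simple closed curve with $m=\iota_S(\gamma,\beta)\ge 2$, I claim one can find an essential simple closed curve $\gamma'$ disjoint from $\gamma$, hence with $\B d_{\C C(S)}(\gamma,\gamma')\le 1$, and with $\iota_S(\gamma',\beta)\le m/2$. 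To construct $\gamma'$ I would cut $\beta$ along $\gamma$ into its $m$ subarcs, select one arc $b$, and surger $\gamma$ along $b$: this replaces $\gamma$ by two simple closed curves $\gamma_1,\gamma_2$, each disjoint from $\gamma$. Pushing the two copies of $b$ off $\beta$ shows $\iota_S(\gamma_1,\beta)+\iota_S(\gamma_2,\beta)\le m-2$, so by pigeonhole one of the two meets $\beta$ at most $(m-2)/2\le m/2$ times, and this is the candidate $\gamma'$.

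Next I would iterate. Setting $\gamma_0=\alpha$ and applying the halving step repeatedly produces curves $\gamma_0,\gamma_1,\gamma_2,\dots$ with consecutive members disjoint and $\iota_S(\gamma_k,\beta)\le m/2^{k}$, so after on the order of $\log_2\iota_S(\alpha,\beta)$ steps one reaches $\gamma_K$ with $\iota_S(\gamma_K,\beta)\le 1$. To close off the path I would split into the two terminal cases. If $\iota_S(\gamma_K,\beta)=0$, then $\gamma_K$ and $\beta$ are disjoint and essential, hence at distance at most $1$. If $\iota_S(\gamma_K,\beta)=1$, then a regular neighborhood of $\gamma_K\cup\beta$ is a one-holed torus whose single boundary curve $\delta$ is disjoint from both $\gamma_K$ and $\beta$, giving $\B d_{\C C(S)}(\gamma_K,\beta)\le 2$. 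Adding the at most $\log_2\iota_S(\alpha,\beta)$ halving edges to this terminal contribution yields a bound of the shape $\log_2\iota_S(\alpha,\beta)+O(1)$; the constants $2$ and $+2$ in the statement are a convenient, non-optimal packaging of this accounting, the extra factor absorbing the loss incurred when essentiality forces a less-than-ideal surgery.

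The main obstacle is not the counting but the essentiality bookkeeping. At every stage I must guarantee that the surgered curve can be taken essential, i.e.\ not bounding a disk and not isotopic into the boundary or around a puncture, and non-isotopic to $\gamma$, since otherwise the step $\B d_{\C C(S)}(\gamma,\gamma')\le 1$ is vacuous and the reduction may stall. This is precisely where the non-sporadic hypothesis is used: I would argue that, ranging over the choice of surgery arc $b$ and the two output components, there is always an essential, intersection-reducing choice, because an inessential outcome would force $\gamma$ and $\beta$ to fill a subsurface too simple to sit essentially inside a non-sporadic $S$ (a disk, annulus, pair of pants, or one-holed torus). The same hypothesis is what makes the terminal boundary curve $\delta$ essential, so the non-sporadic condition is exactly what allows the induction both to run and to terminate.
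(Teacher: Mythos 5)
Your overall strategy (minimal position, surgery along an arc of $\beta$, pigeonhole, induction, once-holed-torus endgame) is exactly the standard route; note that the paper itself gives no proof of this lemma but defers to \cite{schleimer}, where this surgery argument is carried out. For a \emph{closed} surface your sketch is essentially complete: there the only inessential simple closed curves are null-homotopic ones, and a null-homotopic output $\gamma_i=a_i\cup b$ would bound an embedded disc, i.e.\ a bigon between the curves, contradicting minimal position; so both outputs are essential and the halving runs. The genuine gap is in the punctured case, which is precisely the case this paper needs (the lemma is fed into Proposition \ref{P:Tau-length-inequality} for $S'=S\setminus\{p_1,\ldots,p_n\}$). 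There your key claim --- that, ranging over the surgery arc $b$ and the two outputs, one can always find an \emph{essential} curve $\gamma'$ disjoint from $\gamma$ with $\iota_{S}(\gamma',\beta)\le m/2$ --- is false, and so is the heuristic offered for it (that an inessential outcome forces $\gamma$ and $\beta$ to fill a subsurface too simple to embed essentially in a non-sporadic $S$).

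Concretely, let $S$ be the $5$-punctured sphere (non-sporadic), let $\alpha$ bound a pair of pants containing punctures $s,t$ on one side and a $4$-holed sphere $\Sigma$ containing punctures $p,q,r$ on the other, and let $\beta$ cross $\alpha$ in $2k$ points so that its $k$ arcs in $\Sigma$ split into $k_p,k_q,k_r$ arcs separating $p$, resp.\ $q$, resp.\ $r$, from the other two punctures, with no type in the majority (e.g.\ $k_p=k_q=k_r=2$, $k=6$; such a $\beta$ exists in minimal position, and one can even arrange $\alpha\cup\beta$ to fill $S$, so all inessential outcomes here are puncture-parallel curves, not symptoms of filling something small). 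Every essential curve disjoint from $\alpha$ is either $\alpha$-parallel or lies in $\Sigma$ and encloses exactly two of $p,q,r$; a curve enclosing $q,r$ has both punctures on its inside and the endpoints of every type-$q$ and type-$r$ arc on its outside, so it crosses each such arc at least twice, giving
$$\iota_S(\gamma',\beta)\;\ge\; 2\bigl(k-\max(k_p,k_q,k_r)\bigr)\;>\;k\;=\;\tfrac12\,\iota_S(\alpha,\beta).$$
Thus \emph{no} essential curve disjoint from $\alpha$ halves the intersection number: every surgery output, over every arc, is either puncture-parallel or meets $\beta$ at least $\tfrac23\iota_S(\alpha,\beta)$ times, and your induction stalls with no reduction factor established. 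The lemma is nevertheless true: a uniform reduction factor $\rho\le 1/\sqrt2$ per step would still yield the stated constant $2$, and $2/3<1/\sqrt2$; but proving that such a uniform geometric reduction (or some substitute mechanism, as in \cite{schleimer}) is available for \emph{every} configuration on \emph{every} non-sporadic surface is exactly the missing content, and it is where the non-sporadic hypothesis must do real work beyond the bookkeeping you describe.
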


\begin{lemma}\label{L:int-number-inequality}
Let $S$ be a compact oriented surface and $p_1,\ldots,p_n\in S$. 
Let $S'=S\setminus\{p_1,\ldots,p_n\}$ and assume that $S'$ is non-sporadic.
Then for every Riemannian metric on $S$ there exists a constant $C$ 
such that for each two essential simple closed curves $\alpha,\beta$ in $S'$ 
we have
$\iota_{S'}(\alpha,\beta)\leq C l(\alpha)l(\beta)$,
where $l(\alpha)$ is the Riemannian length of $\alpha$.
\end{lemma}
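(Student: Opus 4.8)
The plan is to bound the geometric intersection number by estimating how many times two curves, placed in generic position, can cross inside a compact region where the Riemannian metric is controlled. The key geometric fact is that on a compact surface $S$ with a fixed Riemannian metric, there is a lower bound on the injectivity radius and an upper bound on curvature, so locally the surface looks uniformly like a flat disk. First I would fix a finite cover of $S$ by geodesically convex balls $\{B_\alpha\}$ of some radius $r_0$ smaller than the injectivity radius, chosen so that any two points within distance $r_0$ are joined by a unique minimizing geodesic and so that each $B_\alpha$ is bi-Lipschitz equivalent (with uniform constants) to a Euclidean disk. On each such chart, a geodesic arc is a nearly-straight line, and two arcs can intersect at most once.

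The main estimate comes from a counting argument. I would first isotope $\alpha$ and $\beta$ to geodesic representatives (or piecewise-geodesic curves) realizing lengths comparable to $l(\alpha)$ and $l(\beta)$; on a compact surface with the given metric, every free homotopy class has a geodesic representative, and putting $\alpha,\beta$ in geodesic position realizes their minimal intersection number $\iota_{S'}(\alpha,\beta)$ (up to removing bigons, which geodesics in negative or the relevant curvature setting do automatically, but in general one argues via minimal position). Then I would partition each curve into subarcs, each of length at most $r_0$, so that $\alpha$ is cut into at most roughly $l(\alpha)/r_0 + 1$ subarcs and likewise for $\beta$. Each subarc of $\alpha$ lies in a single chart $B_\alpha$, and within that chart it meets each subarc of $\beta$ in at most a bounded number of points (at most one, for genuine geodesics in a convex ball, or a uniformly bounded number $m$ after the bi-Lipschitz straightening). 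Hence the total number of intersection points is at most
$$
\iota_{S'}(\alpha,\beta) \leq m\left(\frac{l(\alpha)}{r_0}+1\right)\left(\frac{l(\beta)}{r_0}+1\right).
$$

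To convert this into the clean bilinear bound $\iota_{S'}(\alpha,\beta)\leq C\, l(\alpha)l(\beta)$ I would use the fact that essential simple closed curves on a compact surface have length bounded below by a positive constant $\ell_0$ (the systole-type lower bound coming from the injectivity radius), so $l(\alpha)\geq \ell_0$ and $l(\beta)\geq \ell_0$. This lets me absorb the additive $+1$ terms: since $\frac{l(\alpha)}{r_0}+1\leq \left(\frac{1}{r_0}+\frac{1}{\ell_0}\right)l(\alpha)$, the product of the two factors is bounded by a constant multiple of $l(\alpha)l(\beta)$, and folding in the factor $m$ gives a single constant $C$ depending only on $S$, the metric, and the chosen radii. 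The punctures $p_1,\ldots,p_n$ require a small amount of care: since the curves are essential in $S'$ and compactly supported away from the punctures can be assumed after isotopy, intersections all occur in a compact subset of $S'$ where the covering argument applies verbatim, so the puncture set does not affect the estimate.

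The step I expect to be the main obstacle is justifying that geodesic (or near-geodesic) representatives simultaneously realize the minimal intersection number $\iota_{S'}$ \emph{and} have length controlled by $l(\alpha), l(\beta)$. For a fixed metric one cannot take $\alpha,\beta$ to be the given curves of lengths $l(\alpha),l(\beta)$ and assume they are already in minimal position; instead I would note that $\iota_{S'}(\alpha,\beta)$ is defined as an infimum over the isotopy classes, so it suffices to exhibit \emph{some} representatives in minimal position, and then bound the crossings of those representatives — but the length control is on the \emph{given} curves, not on the minimal-position ones. The cleanest route is to drop the minimal-position requirement entirely: the counting argument above bounds the \emph{geometric} number of crossings of the given curves $\alpha,\beta$ themselves, and since $\iota_{S'}(\alpha,\beta)$ is by definition at most the number of crossings of any particular representatives, the bound on the actual crossings of the given $\alpha,\beta$ is already an upper bound for $\iota_{S'}$. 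This observation sidesteps the minimal-position difficulty altogether, and the remaining work is purely the uniform local counting, which the compactness of $S$ makes routine.
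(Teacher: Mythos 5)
Your final paragraph, where you ``sidestep the minimal-position difficulty'' by bounding the actual crossings of the given curves $\alpha,\beta$, is where the argument breaks down, and the failure is fatal rather than technical. The number of transverse intersection points of two simple closed curves is \emph{not} controlled by their lengths: take $\beta$ to be a parallel push-off of $\alpha$ decorated with $N$ zigzags of amplitude $\delta$ across $\alpha$; for $\delta$ small the length of $\beta$ stays bounded while the number of crossings is $2N$, arbitrarily large (and here $\iota_{S'}(\alpha,\beta)=0$, so the crossings are all inessential). Correspondingly, your per-chart counting step is false for non-geodesic arcs: a bi-Lipschitz identification of a ball with a Euclidean disk does not bound the number of crossings of two arbitrary arcs of bounded length, since bi-Lipschitz maps do not straighten curves. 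The ``at most one crossing per convex ball'' property is special to geodesic arcs, so passing to geodesic representatives is not optional --- it is the only mechanism that makes the count finite, and your attempt to avoid it removes exactly that mechanism.

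Once you do pass to geodesic representatives, you hit the genuine difficulty that your proposal (including its ``small amount of care'' remark about punctures) never addresses: the representative must be homotopic to $\alpha$ \emph{in the punctured surface} $S'$, not merely in $S$, because $\iota_{S'}$ is computed in $S'$; and it must have length at most a constant times $l(\alpha)$, where $l(\alpha)$ is the length of the \emph{given} curve. For an arbitrary metric on the compact surface $S$, the closed geodesic freely homotopic to $\alpha$ in $S$ may only be reachable by a homotopy sweeping across a puncture, hence lie in the wrong $S'$-class, and the infimum of length within the $S'$-class need not be attained: minimizing sequences can degenerate onto curves passing through punctures. This is precisely what the paper's proof is engineered to fix. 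It builds a special metric --- hyperbolic on the complement $S_o$ of small discs around the punctures, with round-sphere pieces glued along the boundaries so that each puncture becomes the antipodal point of a glued sphere --- and proves (the Claim in the paper's proof) that in this metric the hyperbolic geodesic $\gamma_\alpha\subset S_o$ minimizes length among all simple loops homotopic to $\alpha$ in $S'$. Only then does it run the covering argument of \cite{MR568308} for geodesics, absorb the additive constants using the lower bound on lengths of essential curves, and finally transfer the estimate to an arbitrary Riemannian metric on $S$ by bi-Lipschitz comparison, which is legitimate because the special metric is itself a metric on the compact surface $S$. Your outline contains the counting and systole steps, but the construction of representatives that are simultaneously geodesic, of controlled length, and in the correct $S'$-isotopy class --- the actual content of the lemma --- is missing.
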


\begin{proof}
An analogous statement is proved in \cite{MR568308}, c.f. \cite[Lemma 4.2]{MR3119826}. 
The difference is that there one works with homotopy
classes of curves on the compact surface $S$ and not on the punctured surface.

We construct a specific metric on $S$ such that we are able to use an argument from \cite{MR568308}.
Then, by comparing metrics, the statement of the lemma holds for any Riemannian metric on $S$.
Let $D_i$ be a small disc centered at $p_i$ and let $S_o = S\setminus(D_1 \cup \ldots \cup D_n)$.
We fix a hyperbolic metric on $S_o$  
such that all boundary loops $\partial D_i$ are totally geodesic and have the same length $\epsilon$.
The induced length is denoted by $l_{S_o}$. 

Let $S_r$ be a $2$-dimensional round sphere of radius $r$
and let $B$ be a ball in $S_r$ of perimeter $\epsilon$. We consider $S_{r,o}=S_r\setminus B$. 
By $p$ we denote the point in $S_r$ which is antipodal to the center of the ball $B$.
Let $x$ and $y$ be two different points in $\partial B$ and let $b\subset\partial B$ be an embedded arc
which connects $x$ to $y$.
If the radius $r$ of $S_r$ is big compared to $\epsilon$, then the arc $b$ has the following property. 
Let $\gamma$ be an arc in $S_{r,o}\setminus{p}$ which connects $x$ to $y$. 
Assume that $b$ and $\gamma$ are homotopic in $S_{r,o}\setminus\{p\}$ relatively to $\{x,y\}$. 
Then $l(\gamma)\geq l(b)$, where $l$ is a Riemannian length with respect to the round metric on $S_r$.

Now we construct a metric on $S$. We start with the surface $S_o$. To each boundary 
component $\partial D_i$ we glue a copy of $S_{r,o}$ along the boundary. 
We obtain a surface homeomorphic to $S$. 
Note that in each copy of $S_{r,o}$ there is one antipodal point $p$. 
These antipodal points naturally correspond to points $\{p_i\}_{i=1}^n$.  
On $S$ we consider the path length $l_S$ induced by the hyperbolic 
length $l_{S_o}$ on $S_o$ and round metrics on copies of $S_{r,o}$.

Let $\alpha$ be an essential simple closed curve in $S'=S\setminus\{p_1,\ldots,p_n\}$. 
Since $S_o$ is a deformation retract of $S'$, $\alpha$ is homotopic 
to a simple closed curve that is contained in the hyperbolic surface $S_o$. 
Let $\gamma_{\alpha}$ be the unique hyperbolic geodesic contained in $S_o$ 
which is homotopic to $\alpha$ in $S'$.  

\textbf{Claim.} The loop $\gamma_{\alpha}$ has the minimal length 
among all simple loops homotopic to $\alpha$ in $S'$. 
\begin{proof}
Let $\gamma$ be a simple loop homotopic to $\alpha$ in $S'$. 
Assume, that $\gamma$ is not contained in $S_o$. 
Then there exists a boundary loop of $S_o$, say $\partial D_i$, 
which intersects $\gamma$ in at least two points. 
Let $x$ and $y$ be two distinct points in $\gamma \cap \partial D_i$ 
and let $a$ be the arc contained in $\gamma$ which connects $x$ to $y$
and is disjoint from the interior of $S_o$.
Since $a$ is an embedded arc, 
it is homotopic in $S_{r,o}\setminus\{p_i\}$ relative to $\{x,y\}$ 
to one of the arcs in $\partial D_i$ whose end points are $x$ and $y$. 
Denote this arc by $b$ (see Figure \ref{fig:surface}).
By construction, $l_S(b)\leq l_S(a)$. 
Hence if we substitute $a$ by $b$, we obtain a new loop $\gamma'$, 
which is homotopic to $\gamma$ in $S'$ and $l_S(\gamma') \leq l_S(\gamma)$.
Repeating this procedure we find a loop $\gamma''$ such that $\gamma''\subset S_o$
and $l_S(\gamma'')\leq l_S(\gamma)$. 
Then $l_S(\gamma_{\alpha})\leq l_S(\gamma'')$ and the claim follows. 
\end{proof}

%%%%%%%%%%%%%%%%%%%%%%%%%%%%%%%%%%%%%%%%%%%%%%%%
\begin{figure}[htb]
\centerline{\includegraphics[width=0.7\textwidth]{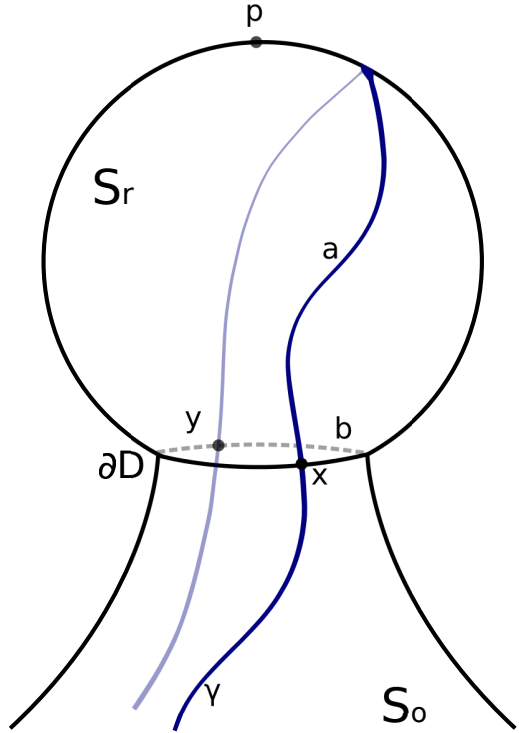}}
\caption{\label{fig:surface} Loop $\gamma$ and arcs $a$ and $b$.}
\end{figure}
%%%%%%%%%%%%%%%%%%%%%%%%%%%%%%%%%%%%%%%%%%%%%%%%

Let $\alpha$ and $\beta$ be essential simple loops in $S'$. 
We prove that there is a constant $C$ such that
$$\iota_{S'}(\alpha,\beta) \leq Cl_S(\alpha)l_S(\beta).$$
It follows from the claim, that it is enough to prove that 
$$\iota_{S'}(\alpha,\beta) \leq Cl_S(\gamma_\alpha)l_S(\gamma_\beta).$$ 
Let us repeat the argument from \cite{MR568308}.
We can assume that $\gamma_\alpha \neq \gamma_\beta$, 
otherwise this inequality is trivial. 
Let $r_1$ be a positive number which is less than 
the injectivity radius of the exponential 
map of the surface $S_o$ equipped with the hyperbolic length $l_{S_o}$.
The geodesic $\gamma_\alpha$ may be covered by fewer than 
$\frac{l_S(\gamma_\alpha)}{r_1}+1$ 
geodesic arcs, each of which is contained in a geodesic disc. 
The same holds for $\gamma_\beta$.
Note that if an arc is close to a boundary of $S_o$ such a disk may contain part of the boundary of $S_o$, 
but this does not affect the argument.  
Now a small arc of $\gamma_\alpha$ intersects a small arc of $\gamma_\beta$ in at most one point. 
Thus we have 
$$
\iota_{S'}(\alpha,\beta) \leq \iota_{S'}(\gamma_\alpha,\gamma_\beta)\leq 
\left(\frac{l_{S}(\gamma_\alpha)}{r_1}+1\right)\left(\frac{l_{S}(\gamma_\beta)}{r_1}+1\right)\thinspace .
$$
Since the length $l_{S}$ of every essential simple closed curve 
in $S'$ is greater or equal than $r_1$, we get
$$
\iota_{S'}(\alpha,\beta)\leq \frac{4}{r_1^2} l_S(\gamma_\alpha)l(\gamma_\beta).
$$
Now let ${\bf g}$ be any Riemannian metric on $S$. It is easy to see, 
that the lengths $l$ induced on $S$ by ${\bf g}$ and $l_S$ are comparable. 
Thus, there exists $C$ such that for every loop $\alpha$ we have $l_S(\alpha)<C l(\alpha)$. 
This finishes the proof of the lemma.  
\end{proof}

\section{Mapping class groups}\label{S:MCG} 
Mapping class group $\OP{MCG}(S)$ of an oriented surface $S$ 
is defined to be the group of orientation preserving diffeomorphisms 
of $S$ which fix the boundary pointwise 
modulo diffeomorphisms which are isotopic to the identity. Since an element in 
$\OP{MCG}(S)$ takes homotopy classes of disjoint essential simple closed curves to 
homotopy classes of disjoint essential simple closed curves,
$\OP{MCG}(S)$ acts by isometries on the curve complex $(\C C(S),\B d_{\C C(S)})$.

Let $[f]\in\OP{MCG}(S)$ and $\alpha$ an essential simple closed curve in $S$.
Recall that the translation length of $[f]$ is 
$$\tau_S([f]):=\lim_{p\to\infty}\frac{\B d_{\C C(S)}(f^p(\alpha),\alpha)}{p}.$$
Translation length is independent of the choice of $\alpha$ and 
vanishes on all periodic and reducible elements of $\OP{MCG}(S)$.

\begin{proposition}\label{P:Tau-length-inequality}
Let $S$ be a compact surface and $p_1,\ldots,p_n\in S$. 
Let $S'=S\setminus\{p_1,\ldots,p_n\}$ be a non-sporadic surface.
Let ${\bf{g}}$ be a Riemannian metric on $S$ such that the length of every essential
simple closed curve is greater than one. Then there exists a constant $B$
such that for every $[f]\in\OP{MCG}(S')$ we have
$$\tau_{S'}([f])\leq 2\log{l(f(\alpha))}+B$$
for every essential simple closed curve $\alpha\subset S'$.
\end{proposition}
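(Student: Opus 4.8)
Since $\OP{MCG}(S')$ acts on the hyperbolic complex $(\C C(S'),\B d_{\C C(S')})$ by isometries, my plan is to first reduce the translation length to a single-step displacement and then convert that displacement into lengths by chaining the two estimates already established. For the reduction I would use subadditivity: because $f$ acts isometrically, $\B d_{\C C(S')}(f^p(\alpha),\alpha)\le\sum_{i=0}^{p-1}\B d_{\C C(S')}(f^{i+1}(\alpha),f^{i}(\alpha))=p\,\B d_{\C C(S')}(f(\alpha),\alpha)$, so dividing by $p$ and passing to the limit defining $\tau_{S'}$ gives the clean estimate $\tau_{S'}([f])\le\B d_{\C C(S')}(f(\alpha),\alpha)$ for the given essential simple closed curve $\alpha$; this is precisely where the independence of $\tau_{S'}$ from the reference curve is used.

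The second step is to feed this single-step distance into the geometric inequalities. When $\iota_{S'}(f(\alpha),\alpha)\neq 0$, Lemma \ref{L:dist-inequality} applied to the non-sporadic surface $S'$ yields $\B d_{\C C(S')}(f(\alpha),\alpha)\le 2\log\iota_{S'}(f(\alpha),\alpha)+2$, and Lemma \ref{L:int-number-inequality} replaces the intersection number by a product of lengths, $\iota_{S'}(f(\alpha),\alpha)\le C\,l(f(\alpha))\,l(\alpha)$. The hypothesis that every essential simple closed curve has length greater than one ensures that all these logarithms are nonnegative; it also disposes of the degenerate case $\iota_{S'}(f(\alpha),\alpha)=0$, for then $f(\alpha)$ and $\alpha$ are disjoint, $\B d_{\C C(S')}(f(\alpha),\alpha)\le 1$, and $\tau_{S'}([f])\le 1$ is absorbed into the additive constant (note that this case can only occur when $\tau_{S'}([f])$ is small, since in general $\B d_{\C C(S')}(f(\alpha),\alpha)\ge\tau_{S'}([f])$). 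Combining the two inequalities produces $\tau_{S'}([f])\le 2\log l(f(\alpha))+2\log l(\alpha)+(2\log C+2)$.

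The genuinely delicate point, and the step I expect to be the main obstacle, is the superfluous summand $2\log l(\alpha)$: it must disappear to reach the stated form $2\log l(f(\alpha))+B$ with $B$ independent of both $[f]$ and $\alpha$, and it cannot simply be absorbed into a constant because $\alpha$ is arbitrary and may be arbitrarily long. I would remove it by running the whole chain on the powers $f^{p}$ instead of on $f$: for each $p$ one gets $\B d_{\C C(S')}(f^{p}(\alpha),\alpha)\le 2\log l(f^{p}(\alpha))+2\log l(\alpha)+(2\log C+2)$, and after dividing by $p$ the $\alpha$-dependent summand $2\log l(\alpha)$ together with the additive constant is pushed to zero, leaving $\tau_{S'}([f])\le 2\limsup_{p\to\infty}\tfrac{1}{p}\log l(f^{p}(\alpha))$. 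One is then reduced to comparing the exponential growth rate of $l(f^{p}(\alpha))$ with a single application of $f$, which I would attempt via a quasi-subadditivity estimate $\log l(f^{p+q}(\alpha))\le\log l(f^{p}(\alpha))+\log l(f^{q}(\alpha))+\mathrm{const}$, so that Fekete's lemma bounds the growth rate by its $p=1$ value up to a constant. The crux is to prove that the constant here depends only on the fixed background geometry of $S'$, uniformly in $f$ and $\alpha$; it is exactly this uniformity that yields a single $B$ valid for all $[f]$ and all $\alpha$, and it is the part of the argument I would expect to require the most care.
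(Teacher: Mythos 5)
Your first two paragraphs reproduce the paper's proof exactly: the reduction $\tau_{S'}([f])\leq \B d_{\C C(S')}(f(\alpha),\alpha)$ via the isometric action, followed by Lemma \ref{L:dist-inequality} chained with Lemma \ref{L:int-number-inequality}, with the case $\iota_{S'}(f(\alpha),\alpha)=0$ handled by the length hypothesis (the paper does this by replacing $C$ with $C_1=\max\{C,1\}$). Up to that point your argument is correct and complete.

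The third paragraph, which you identify as the crux, is a wrong turn caused by over-reading the quantifiers. The paper neither claims nor needs a constant $B$ independent of $\alpha$: its proof ends by setting $B=2(\log(C_1 l(\alpha))+1)$, which visibly depends on $l(\alpha)$, and this suffices for the only application — in the proof of the main theorem (Section \ref{SS:T1}) the curve $\alpha$ is fixed once and for all, while the mapping class $\gamma_{S,n}(f^p,x)$ and its representing diffeomorphism vary. More importantly, the uniform-in-$\alpha$ statement you set out to prove is \emph{false}. Take a pseudo-Anosov class $[g]$ with $\tau_{S'}([g])=\tau_0>0$ and a fixed essential simple closed curve $\beta$; for $N\in\B N$ let $[f]=[g^N]$, so $\tau_{S'}([f])=N\tau_0$, choose a diffeomorphism representative $\phi_N$, and set $\alpha:=\phi_N^{-1}(\beta)$. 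Then $l(f(\alpha))=l(\beta)$ is a constant while $N\tau_0$ is unbounded, so no $B$ independent of $\alpha$ and $[f]$ can exist. It follows that your key missing ingredient — quasi-subadditivity $\log l(f^{p+q}(\alpha))\leq \log l(f^{p}(\alpha))+\log l(f^{q}(\alpha))+c$ with $c$ depending only on the background geometry, uniformly in $f$ and $\alpha$ — cannot hold either: by your own Fekete argument it would imply the statement just refuted (equivalently, it would bound the exponential growth rate of $l(f^p(\alpha))$, and hence the dilatation of every pseudo-Anosov map, by $\log l(\beta)+c$, which is absurd since dilatations are unbounded). Your intermediate inequality $\tau_{S'}([f])\leq 2\limsup_p \frac{1}{p}\log l(f^{p}(\alpha))$ is true — it is essentially how the proposition is exploited later, in combination with Yomdin's theorem — but the further reduction to $p=1$ with a uniform constant is impossible. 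The fix is simply to stop after your second paragraph and take $B=2\log(C_1 l(\alpha))+2$; that is the proposition as the paper intends and uses it.
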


\begin{proof}
Let $\alpha\subset S'$ be an essential simple closed curve. We have 
$$\tau_{S'}([f])\leq\B d_{\C C(S')}(f(\alpha),\alpha).$$
Note that by the definition of $\C C(S')$ we have $\B d_{\C C(S')}(\alpha,\beta)\leq 1$
if and only if $\iota_{S'}(\alpha,\beta)=0$. We take a constant $C_1:=\max\{C,1\}$, 
where $C$ is a constant from Lemma \ref{L:int-number-inequality}. Now 
Lemma \ref{L:dist-inequality} together with Lemma \ref{L:int-number-inequality}
gives us the following inequality
\begin{align*}
\B d_{\C C(S')}(f(\alpha),\alpha)&\leq 2\log{(C_1 l(f(\alpha))l(\alpha))}+2\\
&=2\log{l(f(\alpha))}+2(\log{(C_1l(\alpha))}+1).
\end{align*}
Combining the last two inequalities we obtain
$$\tau_{S'}([f])\leq 2\log{l(f(\alpha))}+B,$$
where $B=2(\log{(C_1l(\alpha))}+1)$.
\end{proof}

\subsection{Bestvina-Fujiwara quasimorphisms}\label{SS:BF}
Here we describe a construction of quasimorphisms on mapping 
class groups due to Bestvina and Fujiwara \cite{MR1914565}.

Let $S$ be an oriented surface and let $\omega$ 
be a finite oriented path in $\C C(S)$. By $|\omega|$ we denote the length of $\omega$. 
Let $\sigma$ be a finite path. We set
$$
|\sigma|_\omega=\{\rm{the\thinspace\thinspace maximal\thinspace\thinspace number\thinspace\thinspace  
of\thinspace\thinspace  non-overlapping\thinspace\thinspace copies\thinspace\thinspace  
of\thinspace\thinspace} \omega\thinspace\thinspace \rm{in}\thinspace\thinspace \sigma\}.
$$
Let $\alpha,\beta$ be two vertices in $\C C(S)$ and let $W$ be an integer 
such that $0~<~W~<~|\omega|$. Define 
$$c_{\omega,W}(\alpha,\beta)=\B d_{\C C(S)}(\alpha,\beta)-\inf(|\sigma|-W|\sigma|_\omega),$$
where $\sigma$ ranges over all paths from $\alpha$ to $\beta$.

Let $\alpha\in\C C(S)$. We define $\psi_\omega\colon\OP{MCG}(S)\to\B R$ by 
$$\psi_\omega([f])=c_{\omega,W}(\alpha,f(\alpha))-c_{\omega^{-1},W}(\alpha,f(\alpha)).$$
Bestvina and Fujiwara proved that $\psi_\omega$ is a quasimorphism \cite{MR1914565}. 
The induced homogeneous quasimorphism is denoted by $\overline{\psi}_\omega$.
We denote by $Q_{\OP{BF}}(\OP{MCG}(S))$ the space of homogeneous quasimorphisms on $\OP{MCG}(S)$
which is spanned by Bestvina-Fujiwara quasimorphisms. In \cite{MR1914565} it is
proved that $Q_{\OP{BF}}(\OP{MCG}(S))$ is infinite dimensional whenever $S$ is a non-sporadic surface.

Let $i^*\colon Q(\OP{MCG}(S,n))\to Q(\OP{K}(S,n))$ be a homomorphism 
induced by the inclusion map $i\colon K(S,n)\to\OP{MCG}(S,n)$.

\begin{corollary}\label{C:BF-braids}
Let $S$ be a closed oriented surface and $n\in\B N$ such that $S$ with $n$ punctures is non-sporadic. 
Then 
$$\C G_{S,n}(Q_{BF}(\OP{MCG}(S,n)))<Q(\OP{Diff}(S,\OP{area}))$$ 
and
$$\C G^0_{S,n}\circ i^*(Q_{BF}(\OP{MCG}(S,n)))<Q(\OP{Diff}_0(S,\OP{area}))$$ 
are infinite dimensional. 
\end{corollary}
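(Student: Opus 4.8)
Here is how I would approach Corollary~\ref{C:BF-braids}. The plan is to reduce both assertions to the injectivity of $\C G_{S,n}$ and $\C G^0_{S,n}$ established in Theorem~\ref{T:embed}, together with the Bestvina--Fujiwara infinite-dimensionality result, so that the only genuine work is to control the restriction homomorphism $i^*$ on the subgroup $\OP{K}(S,n)$.

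For the first statement I would argue directly. Since $S$ with $n$ punctures is non-sporadic, the surface underlying $\OP{MCG}(S,n)$ is non-sporadic, so by Bestvina--Fujiwara \cite{MR1914565} the space $Q_{BF}(\OP{MCG}(S,n))$ is infinite dimensional. By Theorem~\ref{T:embed} the linear map $\C G_{S,n}$ is injective, hence it carries linearly independent families to linearly independent families and preserves the dimension of any subspace. Therefore $\C G_{S,n}(Q_{BF}(\OP{MCG}(S,n)))$ is an infinite-dimensional subspace of $Q(\OP{Diff}(S,\OP{area}))$, with nothing further to prove.

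For the second statement, injectivity of $\C G^0_{S,n}$ (again Theorem~\ref{T:embed}) reduces everything to showing that $i^*(Q_{BF}(\OP{MCG}(S,n)))$ is infinite dimensional inside $Q(\OP{K}(S,n))$. The key observation is that the Bestvina--Fujiwara quasimorphism $\overline{\psi}_{\omega}$ depends on $[f]$ only through the vertex $f(\alpha)$, i.e.\ only through the isometric action on the curve complex $\C C(S')$ of the punctured surface $S'=S\setminus\{p_1,\ldots,p_n\}$. Hence, for $f\in\OP{K}(S,n)$, the number $\overline{\psi}_{\omega}([f])$ is computed by the very same formula whether $[f]$ is viewed in $\OP{MCG}(S,n)$ or in $\OP{K}(S,n)$, so $i^*\overline{\psi}_{\omega}$ is again a Bestvina--Fujiwara quasimorphism, now for the subgroup $\OP{K}(S,n)$ acting on $\C C(S')$, built from the same path $\omega$ and basepoint $\alpha$. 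Thus the real task is to find an infinite family of pairwise independent pseudo-Anosov elements inside $\OP{K}(S,n)$. Recall that $\OP{K}(S,n)=\ker(F)=im(Push)$ is the point-pushing subgroup; by Kra's criterion a point-pushing map is pseudo-Anosov exactly when it pushes along a filling configuration, and filling loops exist in abundance, so choosing two with distinct invariant laminations produces two independent pseudo-Anosovs $g_1,g_2\in\OP{K}(S,n)$. As in \cite{MR1914565}, from two independent (hence WPD) pseudo-Anosovs one manufactures an infinite pairwise-independent family $g_1,g_2,\ldots\in\OP{K}(S,n)$, and taking $\omega_i$ to be a long segment of the quasi-axis of $g_i$ one arranges $\overline{\psi}_{\omega_i}(g_i)\neq 0$ while $\overline{\psi}_{\omega_i}(g_j)=0$ for $j\neq i$. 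Because every $g_j$ lies in $\OP{K}(S,n)$, this diagonal evaluation matrix also computes $(i^*\overline{\psi}_{\omega_i})(g_j)$, so the restrictions $i^*\overline{\psi}_{\omega_i}$ are linearly independent; applying the injective map $\C G^0_{S,n}$ then gives an infinite-dimensional subspace of $Q(\OP{Diff}_0(S,\OP{area}))$.

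The main obstacle is exactly this last input: one must verify, through the Birman exact sequence and Kra's pseudo-Anosov criterion, that the point-pushing subgroup $\OP{K}(S,n)$ is genuinely ``large'', i.e.\ contains two independent pseudo-Anosov elements, and crucially that the elements witnessing their independence are themselves detected by quasimorphisms pulled back from the ambient $\OP{MCG}(S,n)$ via $i^*$, rather than only by quasimorphisms intrinsic to $\OP{K}(S,n)$. The observation that $i^*\overline{\psi}_{\omega}$ is computed by the same curve-complex formula is what makes this compatibility automatic, so once the existence of independent point-pushing pseudo-Anosovs is in hand the rest is the standard Bestvina--Fujiwara separation argument.
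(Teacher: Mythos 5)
Your treatment of the first statement coincides with the paper's: injectivity of $\C G_{S,n}$ (Theorem \ref{T:embed}) together with the infinite-dimensionality of $Q_{BF}(\OP{MCG}(S,n))$ from \cite{MR1914565}, and likewise the reduction of the second statement to showing that $i^*(Q_{BF}(\OP{MCG}(S,n)))$ is infinite dimensional. Where you genuinely diverge is in how that last fact is established. The paper treats it with two black boxes: Ivanov's theorem that an infinite normal subgroup of $\OP{MCG}(S,n)$ is irreducible \cite{MR1195787}, and Bestvina--Fujiwara's Theorem 12, which says that for an irreducible, not virtually abelian subgroup the restricted quasimorphisms stay infinite dimensional; the only remaining work in the paper is algebraic, namely a three-case check (sphere, torus, hyperbolic $S$) via the Birman exact sequence that $\OP{K}(S,n)$ is not virtually abelian, by exhibiting free subgroups in surface braid groups. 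You instead inline the proof of the black box: you note (correctly, and this is automatic from the definition) that $i^*\overline{\psi}_\omega$ is computed by the same curve-complex formula, you produce pseudo-Anosov elements of $\OP{K}(S,n)$ by point-pushing along filling loops, and you rerun the Bestvina--Fujiwara separation argument on elements of $\OP{K}(S,n)$. Both routes are sound, and they verify what is at bottom the same condition (existence of independent pseudo-Anosovs in $\OP{K}(S,n)$); yours is more geometric and explicit and avoids the paper's case analysis entirely, since pushing a single point along a loop filling $S\setminus\{z_2,\ldots,z_n\}$ lies in $\OP{K}(S,n)$ and is pseudo-Anosov in every non-sporadic case, while the paper's is shorter given the cited theorems and never needs Kra or the WPD/separation machinery. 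Two small corrections to your write-up: the criterion you attribute to Kra for pushing a \emph{configuration} of points (``pseudo-Anosov exactly when the configuration fills'') is not the literal statement of his theorem, which concerns pushing a single marked point -- that version suffices here, so phrase it that way; and the Bestvina--Fujiwara machinery naturally yields $\overline{\psi}_{\omega_i}(g_j)=0$ only for $j<i$, i.e.\ a triangular rather than diagonal evaluation matrix, which is still enough for linear independence.
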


\begin{proof}
The maps $\C G_{S,n}$ and $\C G^0_{S,n}$ are injective by Theorem \ref{T:embed}.
Since the space $Q_{BF}(\OP{MCG}(S,n))$ is infinite dimensional \cite{MR1914565},
it follows that $\C G_{S,n}(Q_{BF}(\OP{MCG}(S,n)))$ is infinite dimensional.

It is left to prove that $i^*(Q_{BF}(\OP{MCG}(S,n)))$ is infinite dimensional.
Since $K(S,n)$ is an infinite normal subgroup of $\OP{MCG}(S,n)$, it is non reducible by
a theorem of Ivanov \cite[Corollary 7.13]{MR1195787}. In order to prove that the space 
$i^*(Q_{BF}(\OP{MCG}(S,n)))$ is infinite dimensional
it is enough to show that $K(S,n)$ is not virtually abelian, 
see \cite[Theorem 12]{MR1914565}. There are three cases.

\textbf{Case 1}. The surface $S$ is a sphere and $n>3$. In this case the mapping class group of $S$
is trivial and thus the group $K(S,n)$ is nothing but $\OP{MCG}(S,n)$ which is not virtually abelian.

\textbf{Case 2}. The surface $S$ is a torus and $n>1$. 
It follows from the Birman sequence for torus \cite{MR0243519}
that the group $K(S,n)$ maps onto $\B B_n(S)$ modulo center, where $\B B_n(S)$ is the torus braid group
on $n$ strings. Let $\B P_n(S)$ be the pure torus braid group on $n$ strings. By removing $n-2$ strings
we get an epimorphism from $\B P_n(S)$ to $\B P_2(S)$ which is isomorphic to $\B Z^2\times \B F_2$. 
It follows that $\B B_n(S)$ modulo center is not virtually abelian, and so is $K(S,n)$.

\textbf{Case 3}. The surface $S$ is hyperbolic. It follows from the Birman 
exact sequence \cite{MR0243519, MR0375281}
that the group $K(S,n)$ is isomorphic to $\B B_n(S)$ which is the braid group of $S$
on $n$ strings. Let $\B P_n(S)$ be the pure braid group of $S$ on $n$ strings. 
By removing $n-1$ strings we get an epimorphism from $\B P_n(S)$ to $\B P_1(S)$ 
which contains $\B F_2$. 
It follows that $\B B_n(S)$ is not virtually abelian, and so is $K(S,n)$.
\end{proof}

\begin{lemma}\label{L:tau-qm-inequality}
Let $S$ be an oriented surface. Then for every quasimorphism
$\psi\in Q_{\OP{BF}}(\OP{MCG}(S))$ there is a positive constant $C_\psi$
such that for every $[f]$ in $\OP{MCG}(S)$ we have
$$|\psi([f])|\leq C_{\psi}\tau_S([f])$$
\end{lemma}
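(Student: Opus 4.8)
The plan is to reduce to a single Bestvina--Fujiwara quasimorphism and then bound the counting functions $c_{\omega,W}$ uniformly by the curve-complex distance. Since $Q_{\OP{BF}}(\OP{MCG}(S))$ is by definition spanned by the $\overline{\psi}_\omega$, any $\psi$ in it is a finite sum $\psi=\sum_{i}a_i\overline{\psi}_{\omega_i}$; by the triangle inequality it then suffices to produce a constant for each $\overline{\psi}_\omega$ separately and set $C_\psi=\sum_i|a_i|C_{\omega_i}$. So I would fix a single oriented path $\omega$ together with an integer $W$ with $0<W<|\omega|$, and aim to prove $|\overline{\psi}_\omega([f])|\le\tau_S([f])$ for all $[f]$.

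The crux is the two-sided estimate $0\le c_{\omega,W}(\alpha,\beta)\le\B d_{\C C(S)}(\alpha,\beta)$ for all vertices $\alpha,\beta$. The lower bound is immediate: evaluating the infimum on a geodesic $\sigma$ from $\alpha$ to $\beta$ gives $|\sigma|-W|\sigma|_\omega\le|\sigma|=\B d_{\C C(S)}(\alpha,\beta)$, so the infimum is at most $\B d_{\C C(S)}(\alpha,\beta)$ and hence $c_{\omega,W}\ge 0$. For the upper bound I would use that any $|\sigma|_\omega$ non-overlapping copies of $\omega$ occupy $|\sigma|_\omega\cdot|\omega|$ edges of $\sigma$, so $|\sigma|_\omega\cdot|\omega|\le|\sigma|$; combined with $W\le|\omega|-1$ this yields $W|\sigma|_\omega\le|\sigma|-|\sigma|_\omega\le|\sigma|$, whence $|\sigma|-W|\sigma|_\omega\ge 0$ for \emph{every} path $\sigma$. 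Therefore the infimum is nonnegative and $c_{\omega,W}(\alpha,\beta)\le\B d_{\C C(S)}(\alpha,\beta)$.

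With this in hand the remainder is formal. Applying the bound to both $\omega$ and $\omega^{-1}$ and subtracting gives, for every $[g]\in\OP{MCG}(S)$,
$$|\psi_\omega([g])|=|c_{\omega,W}(\alpha,g(\alpha))-c_{\omega^{-1},W}(\alpha,g(\alpha))|\le\B d_{\C C(S)}(\alpha,g(\alpha)).$$
Substituting $g=f^p$, dividing by $p$, and letting $p\to\infty$, the left side tends to $|\overline{\psi}_\omega([f])|$ while the right side tends to $\tau_S([f])$ by the definition of translation length; this proves $|\overline{\psi}_\omega([f])|\le\tau_S([f])$, and reassembling the linear combination finishes the argument.

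I expect the only genuine content to be the upper bound $c_{\omega,W}\le\B d_{\C C(S)}$, namely the observation that the constraint $W<|\omega|$ keeps $|\sigma|-W|\sigma|_\omega$ nonnegative; the homogenization, the passage to the limit, and the reduction by linearity are all routine. One small point to verify is that the limit defining $\overline{\psi}_\omega$ exists, so that $\lim_p|\psi_\omega(f^p)|/p=|\overline{\psi}_\omega([f])|$; this is guaranteed because $\psi_\omega$ is a quasimorphism whose homogenization is exactly $\overline{\psi}_\omega$.
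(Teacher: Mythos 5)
Your proof is correct and follows essentially the same route as the paper's: decompose $\psi$ into a linear combination of the $\overline{\psi}_{\omega_i}$, establish the two-sided bound $0\leq c_{\omega,W}(\alpha,\beta)\leq \B d_{\C C(S)}(\alpha,\beta)$, and pass to the homogenization and translation length, finishing by the triangle inequality. The only differences are cosmetic: you supply the short argument for the two-sided bound (which the paper merely asserts), and by bounding $|\psi_\omega([f^p])|$ before taking limits you sidestep the paper's separate justification that $\lim_p c_{\omega,W}(\alpha,f^p(\alpha))/p$ exists.
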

\begin{proof}
It follows from the definition of $Q_{\OP{BF}}(\OP{MCG}(S))$ that for each
$\psi$ in $Q_{\OP{BF}}(\OP{MCG}(S))$ there exist $k\in\B N$,
$a_1,\ldots,a_k\in\B R$ and $\omega_1,\ldots,\omega_k$ finite oriented 
paths in $\C C(S)$ such that 
$$\psi=\sum_{i=1}^k a_i\overline{\psi}_{\omega_i}.$$
Note that we always have $0 \leq c_{\omega,W}(\alpha,\beta) \leq d_{\C C(S)}(\alpha,\beta)$.
Thus 
$$0 \leq \lim_{p\to\infty}\frac{c_{\omega,W}(\alpha,f^p(\alpha))}{p} \leq \tau_S([f]).$$
The limit exists, since the function $p \to c_{\omega,W}(\alpha,f^p(\alpha))$ is expressed as the difference of two
sub-additive functions, namely $p \to d_{\C C(S)}(\alpha,f^p(\alpha))$ and 
$p \to \inf_{\sigma \in P(\alpha,f^p(\alpha))}(|\sigma|-W|\sigma|_\omega)$,
where $P(\alpha,\beta)$ is the set of all paths from $\alpha$ to $\beta$.
By the definition of $\overline{\psi}_{\omega_i}$ we have
$$|\overline{\psi}_{\omega_i}([f])|\leq \tau_S([f]).$$
It follows that
$$|\psi([f])|\leq\left(\sum_{i=1}^k |a_i|\right)\tau_S([f]).$$
\end{proof}

\section{Proofs}\label{S:proofs}
\subsection{Proof of Theorem \ref{T:main}}\label{SS:T1} 
Assume that $S$ is a closed surface. We discuss the case of a surface with boundary at the end. 
Equip $S$ with a metric as in Proposition \ref{P:Tau-length-inequality}.
Denote $\C G=\C G_{S,n}$, $\gamma=\gamma_{S,n}$ 
and assume that $n$ is such that $S$ with $n$ punctures is non-sporadic. 
At the end of the proof we discuss the case when $\C G=\C G^0_{S,n}$.
We pick an essential simple closed curve $\alpha$ in 
the punctured surface $S'=S \setminus\{z_1,\ldots,z_n\}$ (see Section \ref{sec:conf}). 
Let $\psi\in Q_{BF}(\OP{MCG}(S,n))$ and $f\in\OP{Diff}(S,\OP{area})$. 
Then
\begin{align*}
|\C G(\psi)(f)|&\leq\int_{X_n}\lim_{p\to\infty}\frac{|\psi\circ\gamma(f^p,x))|}{p}dx\\
&\leq C_{\psi}\int_{X_n}\lim_{p\to\infty}\frac{\tau_{S'}\circ \gamma(f^p,x)}{p}dx\\
&\leq 2C_{\psi}\int_{X_n}\lim_{p\to\infty}\frac{\log(l(h^{-1}_{f^p(x)}\circ f^p\circ h_x(\alpha)))}{p}dx\thinspace ,
 \end{align*}
where the second inequality is by Lemma \ref{L:tau-qm-inequality}, the third inequality is by 
Proposition \ref{P:Tau-length-inequality}. 

Let $x\in\Omega_n$ (see Section \ref{sec:conf}) and let $U_x$ be an open set such that 
$x\in U_x$ and the closure of $U_x$ is in $\Omega_n$. 
It follows from the Poincar\'e recurrence theorem that for almost all $x$, after passing to a subsequence, 
we can assume that $f^p(x) \in U_x$. 
Due to $C^1$-continuity of the function $h^{-1}_x$ on $\Omega_n$, there exists a constant $K_x$ such that
$$\sup_{p \geq 0}\|h^{-1}_{f^p(x)}\|_1\leq K_x,$$ 
where $\|\cdot\|_1$ is the $C^1$-norm. 
Thus for almost every $x\in X_n$ we get
$$
\lim_{p\to\infty}\frac{\log(l(h^{-1}_{f^p(x)}\circ f^p\circ h_x(\alpha)))}{p}
\leq \lim_{p\to\infty}\frac{\log(K_xl(f^p\circ h_x(\alpha)))}{p}.
$$
This yields
$$
|\C G(\psi)(f)|\leq 2C_{\psi}\int_{X_n}\lim_{p\to\infty}\frac{\log(l(f^p\circ h_x(\alpha)))}{p}dx\thinspace.
$$
We apply Yomdin result \cite[Theorem 1.4]{MR889979} and get that for almost every $x\in X_n$
$$
\lim_{p\to\infty}\frac{\log(l(f^p\circ h_x(\alpha)))}{p}\leq h(f).
$$
Combining last two inequalities we get
$$
|\C G(\psi)(f)|\leq 2C_{\psi}\OP{vol}(X_n)h(f).
$$

Since this inequality applies to any $\psi\in Q_{BF}(\OP{MCG}(S,n))$, 
then by Corollary \ref{C:BF-braids} the space
of quasimorphisms which are Lipschitz with respect to the entropy is infinite dimensional. 

In case when $\C G=\C G^0_{S,n}$ the proof is the same. 
The fact that the space of quasimorphisms on $\OP{Diff}_0(S,\OP{area})$ 
bounding entropy from below is infinite dimensional again follows
from Corollary \ref{C:BF-braids}.

Let us discuss the case of $\OP{Ham}(S)$. It is a simple group which is isomorphic to 
the commutator subgroup of $\OP{Diff}_0(S,\OP{area})$ \cite{MR1445290}.
Since every quasimorphism in $Q_{BF}(\OP{MCG}(S,n))$ vanishes on reducible elements, the space
$i^*\circ Q_{BF}(\OP{MCG}(S,n))$ contains no non-trivial homomorphisms to the reals. In addition, 
for every $\psi\in i^*\circ Q_{BF}(\OP{MCG}(S,n))$ 
the map $\C G^0_{S,n}(\psi)$ is not a homomorphism.
Note that the the kernel of the restriction homomorphism 
$Q(\OP{Diff}_0(S,\OP{area}))\to Q(\OP{Ham}(S))$
is the space $\OP{Hom}(\OP{Diff}_0(S,\OP{area}), \B R)$. Therefore the space
$$\frac{Q(\OP{Diff}_0(S,\OP{area}))}{\OP{Hom}(\OP{Diff}_0(S,\OP{area}), \B R)}$$ 
is isomorphic to a subspace of $Q(\OP{Ham}(S))$. It follows that the map 
$$\C G^0_{S,n}\colon i^*\circ Q_{BF}(\OP{MCG}(S,n))\to Q(\OP{Ham}(S))$$
is injective. Now the proof is identical to the case of $\OP{Diff}_0(S,\OP{area})$.

At last let us comment on the case when $S$ has a boundary. 
In this case we embed $S$ into a closed surface $\overline{S}$.
Then each one of the groups $\OP{Diff}(S,\OP{area})$, $\OP{Diff}_0(S,\OP{area})$ and $\OP{Ham}(S)$
embed in the usual way into the groups $\OP{Diff}(\overline{S},\OP{area})$, 
$\OP{Diff}_0(\overline{S},\OP{area})$ and $\OP{Ham}(\overline{S})$ respectively.
It follows from the proof of the embedding theorem that $\C G_{\overline{S},n}(\psi)$ is
non-trivial on $\OP{Diff}(S,\OP{area})$ provided $\psi$ is non-trivial. Similarly,
$\C G^0_{\overline{S},n}(i^*\circ\psi)$ is
non-trivial on $\OP{Diff}_0(S,\OP{area})$ and on $\OP{Ham}(S)$ provided $i^*\circ\psi$ is non-trivial. \qed

\subsection{Proof of Theorem \ref{T:unboundedness}}
We start with the following

\begin{lemma}\label{L:generation}
Let $G=\OP{Diff}(S,\OP{area})$, $G=\OP{Diff}_0(S,\OP{area})$ or $G=\OP{Ham}(S)$.
Then $G$ is generated by the set $\OP{Ent}(S)$ of entropy-zero diffeomorphisms of~$G$.
\end{lemma}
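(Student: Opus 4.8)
The plan is to show that each of the three groups is generated by entropy-zero diffeomorphisms, reducing the problem to the generators exhibited in Lemma \ref{l:generators}. The key observation is that every element of that generating set has vanishing topological entropy, so it suffices to verify this claim for each of the three generator types and then combine it with the generation statement already established.

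First I would recall that by Lemma \ref{l:generators}, the group $\OP{Diff}(S,\OP{area})$ is generated by Morse autonomous diffeomorphisms, Hamiltonian pushes, and area-preserving Dehn twists. The proof of that lemma also shows, as intermediate steps, that $\OP{Ham}(S)$ is generated by Morse autonomous diffeomorphisms alone and that $\OP{Diff}_0(S,\OP{area})$ is generated by Morse autonomous diffeomorphisms together with Hamiltonian pushes. Thus to handle all three groups simultaneously it is enough to prove that every generator of each type lies in $\OP{Ent}(S)$, i.e.\ has zero topological entropy.

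Next I would verify the entropy-zero property for each type. Morse autonomous diffeomorphisms are time-one maps of autonomous Hamiltonian flows on a surface, and by the result of \cite{MR0464325} (cited in the introductory remark) every autonomous area-preserving diffeomorphism of a surface has entropy zero. Hamiltonian pushes are likewise time-$t$ maps of autonomous Hamiltonians supported in an annulus, so the same argument applies; one can see directly that such a map is conjugate to a rotation-type map on the annulus and is the identity elsewhere, hence its entropy vanishes. For area-preserving Dehn twists, the map is the identity off a tubular neighborhood $A$ of a simple loop and on $A\cong[0,1]\times S^1$ it is given in coordinates by $(s,\psi)\mapsto(s,\psi+f(s))$; this is a measure-preserving map whose iterates have polynomially bounded (in fact linear) expansion of curves, so by the variational characterization of entropy, or by a direct count of the complexity growth, its topological entropy is zero.

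Finally I would assemble these facts: since each generator lies in $\OP{Ent}(S)$ and these generators generate $G$, every element of $G$ is a finite product of entropy-zero diffeomorphisms, which is exactly the assertion that $\OP{Ent}(S)$ generates $G$. The main obstacle is the rigorous verification of entropy zero for the area-preserving Dehn twist, since this is the one generator that is not a Hamiltonian flow; the cleanest route is to observe that on the annulus it is a fibered twist preserving the $s$-coordinate foliation, so that it cannot separate orbits exponentially fast and its entropy must vanish. The other two cases reduce immediately to the cited theorem on autonomous surface diffeomorphisms.
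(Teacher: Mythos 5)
Your entropy-zero verifications are correct and match what the paper itself relies on: autonomous flows on surfaces have zero entropy by \cite{MR0464325} (this covers both Morse autonomous diffeomorphisms and Hamiltonian pushes, since a push is the time-$t$ map of the autonomous vector field obtained by extending $X_H$ by zero outside the annulus), and the model twist $D(s,\psi)=(s,\psi+f(s))$ stretches curves at most linearly under iteration, so area-preserving Dehn twists have zero entropy as well. For a \emph{closed} surface $S$ your argument therefore goes through, and it uses essentially the same ingredients as the paper's proof (flux, Dehn-twist representatives of zero entropy, autonomous maps).

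However, there is a genuine gap: Lemma \ref{l:generators}, on which your entire reduction rests, is stated and proved only for closed oriented surfaces, whereas Lemma \ref{L:generation} is needed for all compact oriented surfaces, including those with boundary. In particular it is needed for the closed disc: the second half of Theorem \ref{T:unboundedness} concerns $(\OP{Diff}(D^2,\OP{area}),\B d_{\OP{Ent}})$, and already the well-definedness of the entropy norm there requires that $\OP{Ent}(D^2)$ generate the group. The restriction in Lemma \ref{l:generators} is not cosmetic: its proof begins by invoking the simplicity of $\OP{Ham}(S)$ (a conjugation-invariant subset of a simple group generates it), and simplicity fails when $S$ has boundary and diffeomorphisms are required to be the identity near $\partial S$, because of the Calabi homomorphism $\OP{Ham}(D^2)\to\B R$. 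The paper sidesteps this with a different decomposition for $\OP{Ham}(S)$: by the fragmentation lemma \cite{MR1445290}, any $f\in\OP{Ham}(S)$ is a product of diffeomorphisms supported in embedded discs, and each disc-supported piece is a product of autonomous (hence entropy-zero) diffeomorphisms by \cite{MR3044593}; this works uniformly for compact surfaces with or without boundary, and the $\OP{Diff}_0$ and $\OP{Diff}$ cases then follow by the flux and Dehn-twist arguments you also use. To repair your proof you would either need to supply this fragmentation argument or extend Lemma \ref{l:generators} to surfaces with boundary by some other means.
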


\begin{proof} \textbf{Case 1:} $G=\OP{Ham}(S)$.
Denote by $\C D$ the set of embedded discs in $S$ of area less than or equal to half of $\OP{area}(S)$. 
Then by fragmentation lemma \cite{MR1445290} for every $f\in G$ there exists a finite collection
of discs $\{D_i\}_{i=1}^k$ in $\C D$ and diffeomorphisms $\{h_i\}_{i=1}^k$ such 
that each $h_i$ is supported in $D_i$ and $f=h_1\circ\ldots\circ h_k$. 
Each $h_i\in\OP{Diff}(D_i,\OP{area})$ is generated
by autonomous diffeomorphisms \cite{MR3044593}. 
An autonomous diffeomorphism is a flow of a vector field, and as such has zero entropy, see \cite{MR0464325}, and the proof follows.

\textbf{Case 2:} $G=\OP{Diff}_0(S,\OP{area})$. 
There is a surjective homomorphism $Flux$ from $G$ to $\frac{\OP{H}^1(S,\B R)}{\Gamma}$, where
$\Gamma$ is the flux group of $Flux$. The kernel of $Flux$ is $\OP{Ham}(S)$ \cite{MR1445290}. 
Take $f\in G$. Then there exists an autonomous (and therefore of zero entropy) 
diffeomorphism $h$ of $S$ such that $Flux(f\circ h)=0$. Hence $f\circ h\in \OP{Ham}(S)$ 
which is generated by entropy-zero diffeomorphisms and so is $G$.

\textbf{Case 3:} $G=\OP{Diff}(S,\OP{area})$. Mapping class group of $S$
is isomorphic to 
$\frac{\OP{Diff}(S,\OP{area})}{\OP{Diff}_0(S,\OP{area})}.$ 
It is generated by Dehn twists. Recall that every Dehn twist has a representative of zero entropy and the 
group $\OP{Diff}_0(S,\OP{area})$ is generated by entropy-zero diffeomorphisms. Hence 
the group $G$ is generated by entropy-zero diffeomorphisms.
\end{proof}

%\begin{proof}
%In Lemma \ref{l:generators} it is shown that Morse autonomous diffeomorphisms generate $\OP{Ham}(S)$, 
%Morse autonomous diffeomorphisms and Hamiltonian pushes generate $\OP{Diff}_0(S,\OP{area})$ and
%$\OP{Diff}(S,\OP{area})$ is generated by the former and area-preserving Dehn twists. 
%We show that these generators have zero entropy.

%A flow of a vector field on two dimensional manifold has zero entropy, see \cite{MR2104597}.
%Morse autonomous diffeomorphisms and Hamiltonian pushes are flows, thus they have zero entropy.
%Let $f$ be a Dehn twists and let $A$ be its support. The entropy of $f$ and $f_{|A}$ is the same. 
%But on $A$, the Dehn twist is a flow of a vector field, thus has zero entropy. 
%\end{proof}

\begin{lemma}\label{L:unb-qm}
Let $G$ be a group, $\C S$ its generating set and $\B d_{\C S}$ 
the induced word metric on $G$.
Let $\psi\colon G\to\B R$ a non-trivial homogeneous
quasimorphism which vanishes on $\C S$. Then 
$$\OP{diam}(G,\B d_{\C S})=\infty.$$ 

\end{lemma}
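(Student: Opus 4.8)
The plan is to bound $|\psi|$ from above by a linear function of the word length and then exhibit elements on which $|\psi|$ is arbitrarily large, which forces their word length to grow. The central step is the iterated quasimorphism inequality: if $g=s_1s_2\cdots s_k$ with each $s_i$ a generator or the inverse of a generator, then a straightforward induction on $k$, using $|\psi(ab)-\psi(a)-\psi(b)|\le D_\psi$ at each stage, yields
$$\left|\psi(g)-\sum_{i=1}^k\psi(s_i)\right|\le (k-1)D_\psi.$$
Since $\psi$ is homogeneous we have $\psi(s^{-1})=-\psi(s)$, so $\psi$ vanishes on $\C S^{-1}$ as well as on $\C S$; hence every letter contributes $\psi(s_i)=0$ and the estimate collapses to $|\psi(g)|\le (k-1)D_\psi$. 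Minimizing over all expressions of $g$ as a product of generators gives, for every $g\in G$,
$$|\psi(g)|\le \B d_{\C S}(e,g)\,D_\psi.$$

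Next I would dispose of the degenerate case $D_\psi=0$. If the defect vanished, $\psi$ would be an honest homomorphism, and a homomorphism vanishing on a generating set vanishes identically, contradicting the non-triviality of $\psi$. Thus $D_\psi>0$, and the displayed bound rearranges into a genuine lower bound on distance,
$$\B d_{\C S}(e,g)\ge \frac{|\psi(g)|}{D_\psi}.$$

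Finally, non-triviality of $\psi$ supplies an element $h\in G$ with $\psi(h)\neq 0$. By homogeneity $|\psi(h^m)|=m\,|\psi(h)|\to\infty$, and the lower bound then forces $\B d_{\C S}(e,h^m)\to\infty$, so the sequence $\{h^m\}$ eventually leaves every metric ball and $\OP{diam}(G,\B d_{\C S})=\infty$. There is no serious obstacle here; the only point demanding a line of care is the dichotomy $D_\psi=0$, which I rule out precisely because a zero-defect quasimorphism vanishing on a generating set would be trivial, and this is exactly what makes the word-length estimate usable.
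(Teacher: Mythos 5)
Your proof is correct and follows essentially the same route as the paper's: bound $|\psi(g)|$ by $D_\psi$ times the word length using the vanishing of $\psi$ on generators, then let powers $h^m$ of an element with $\psi(h)\neq 0$ force the diameter to be infinite. You are in fact slightly more careful than the paper, which silently divides by $D_\psi$ and ignores inverses of generators, whereas you rule out $D_\psi=0$ via non-triviality and handle $\C S^{-1}$ via homogeneity.
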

\begin{proof}
Let $g\in G$. Let $s_1,\ldots,s_k\in\C S$
such that $g=s_1\circ\ldots\circ s_k$ and $\|g\|_{\C S}=k$, where $\|\cdot\|_{\C S}$
is the induced word norm on $G$. Then since $\psi$ vanishes on $\C S$ we have
$$|\psi(g)|=\left|\psi(g)-\sum_{i=1}^k\psi(s_i)\right|\leq D_{\psi}\|g\|_{\C S}.$$
Take $h\in G$ such that $\psi(h)\neq 0$. Then for every $n\in\B N$
$$\|h^n\|_{\C S}\geq n\left(\frac{|\psi(h)|}{D_{\psi}}\right).$$
\end{proof}

It follows from
Theorem \ref{T:main} that there are infinitely many linearly independent homogeneous
quasimorphisms on $\OP{Diff}(S,\OP{area})$, on $\OP{Diff}_0(S,\OP{area})$ and on $\OP{Ham}(S)$ 
which vanish on the set of entropy-zero diffeomorphisms.
By Lemma \ref{L:unb-qm} we have
$$\OP{diam}(\OP{Diff}(S,\OP{area}), \B d_{\OP{Ent}})=\infty,
\quad\OP{diam}(\OP{Diff}_0(S,\OP{area}), \B d_{\OP{Ent}})=\infty\thinspace,$$
$$\OP{diam}(\OP{Ham}(S), \B d_{\OP{Ent}})=\infty\thinspace.$$

Now we prove the second statement of the theorem. Let $S=D^2$ 
be the unit disc in the plane centered at zero and $m\in\B N$.
Note that in this case 
$$\OP{Diff}(D^2,\OP{area})=\OP{Diff}_0(D^2,\OP{area})=\OP{Ham}(D^2).$$

Let $r<\frac{1}{m}$. Denote by $D_r$ the disc in the 
plane of radius $r$ centered at zero. 
Denote $\C G =\C G_{D^2,n}$ and $\C G_r = \C G_{D_r,n}$. 
The inclusion $D_r\subset D^2$ induces an isomorphism $K(D_r,n)\simeq K(D^2,n)$.
Note that $K(D^2,n)=\OP{MCG}(D^2,n)$ which is isomorphic via Birman isomorphism 
to the Artin braid group $\B B_n$. From now on for $x\in X_n$ 
and $f\in\OP{Diff}(D^2,\OP{area}))$ we regard a mapping class $\gamma(f,x)$ as an element of $\B B_n$.
We have:
$$\C G \colon Q(\B B_n) \to Q(\OP{Diff}(D^2,\OP{area}))$$
$$\C G_r \colon Q(\B B_n) \to Q(\OP{Diff}(D_r,\OP{area})).$$
We extend every diffeomorphism in $\OP{Diff}(D_r,\OP{area})$ by identity on $D^2$
and get an injective homomorphism 
$$i_r\colon\OP{Diff}(D_r,\OP{area})\to\OP{Diff}(D^2,\OP{area}).$$

\begin{lemma}\label{L:Ext}
Let $n\geq 4$. Then $\C G_r=i_r^*\circ\C G$
on the linear subspace $Q_{\OP{BF}}(\B B_n)$ of $Q(\B B_n)$.  
\end{lemma}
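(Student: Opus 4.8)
The plan is to prove the equivalent pointwise statement that $\C G_r(\psi)(f)=\C G(\psi)(i_r(f))$ for every $\psi\in Q_{\OP{BF}}(\B B_n)$ and every $f\in\OP{Diff}(D_r,\OP{area})$, where $\C G(\psi)(i_r(f))=(i_r^*\circ\C G)(\psi)(f)$ by definition of the pullback. Exactly as in the proofs of Theorem \ref{T:embed} and of Theorem \ref{T:main}, Lemma \ref{l:fin.many.braids} together with the cocycle bound lets me move the homogenizing limit inside the integral, so both quantities are computed as integrals over $X_n(D^2)$ of the pointwise limit $\overline\Psi(x):=\lim_{p\to\infty}\tfrac1p\,\psi(\gamma_{D^2,n}(i_r(f)^p,x))$ (and its $D_r$-analogue). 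I keep all basepoints $z=(z_1,\dots,z_n)$ inside $D_r$ and split $X_n(D^2)$ into the set $X_n(D_r)$, where every coordinate lies in $D_r$, and its complement $Y$, where at least one coordinate lies in $D^2\setminus D_r$.

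On $X_n(D_r)$ the two constructions literally coincide. Both $D_r$ and $D^2$ carry the flat metric, so the geodesic segments $P_i$ joining $z_i$ to $x_i$ are straight; since $D_r$ is convex and $z_i,x_i\in D_r$, these segments together with their thin supporting neighborhoods stay in $D_r$, so the section $h_x$ of Section \ref{sec:conf} built in $D_r$ agrees with the one built in $D^2$. As $i_r(f)$ preserves $D_r$, the orbit of $x$ stays in $D_r$, hence $\gamma_{D^2,n}(i_r(f)^p,x)=\gamma_{D_r,n}(f^p,x)$ under the identification $\B B_n(D_r)\cong\B B_n(D^2)$. Therefore the contribution of $X_n(D_r)$ to $\C G(\psi)(i_r(f))$ equals exactly $\C G_r(\psi)(f)$.

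It remains to show $\overline\Psi(x)=0$ for almost every $x\in Y$. Fix such an $x$ and let $k'$ be the number of coordinates of $x$ lying in $D_r$. By the Poincar\'e recurrence theorem, as in Section \ref{SS:T1}, after passing to a subsequence $p_j$ I may assume $i_r(f)^{p_j}(x)\to x$; by the $C^1$-continuity of the section on $\Omega_n$ the curves $h^{-1}_{i_r(f)^{p_j}(x)}(\partial D_r)$ and $h_x^{-1}(\partial D_r)$ are then isotopic in the $n$-punctured disc $S'=D^2\setminus\{z_i\}$ for large $j$. Since $i_r(f)$ is supported in $D_r$ it fixes $\partial D_r$, so $\gamma_{D^2,n}(i_r(f)^{p_j},x)$ sends $c:=h_x^{-1}(\partial D_r)$ to $h^{-1}_{i_r(f)^{p_j}(x)}(\partial D_r)$, i.e.\ it fixes the isotopy class of $c$ for large $j$; this curve separates the $k'$ punctures $\{z_i:x_i\in D_r\}$ from the rest. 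When $2\le k'\le n-1$ the curve $c$ is essential, so $\gamma_{D^2,n}(i_r(f)^{p_j},x)$ is reducible; when $k'\le 1$ the class is carried by a sub-disc containing at most one puncture and is trivial, since the pushing loop is null-homotopic in the complement of the remaining punctures. In every case the translation length $\tau_{S'}(\gamma_{D^2,n}(i_r(f)^{p_j},x))$ vanishes, so Lemma \ref{L:tau-qm-inequality} gives $\psi(\gamma_{D^2,n}(i_r(f)^{p_j},x))=0$ along the subsequence, whence $\overline\Psi(x)=0$.

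Adding the two computations yields $\C G(\psi)(i_r(f))=\C G_r(\psi)(f)$, which is the claim. The delicate step, and the one I would write out most carefully, is the vanishing on $Y$: one must genuinely invoke recurrence and the $C^1$-continuity of $h$ to replace the $p$-dependent return section $h_{i_r(f)^p(x)}$ by $h_x$, so that $\partial D_r$ becomes an honest invariant curve rather than merely a curve enclosing the same punctures, and one must verify essentiality of the reducing curve. It is precisely this essentiality check that forces the non-sporadicity hypothesis $n\ge 4$ and the separate treatment of the degenerate counts $k'\in\{0,1,n\}$.
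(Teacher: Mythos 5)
Your overall architecture — splitting $X_n(D^2)$ into $X_n(D_r)$ and its complement, identifying the $X_n(D_r)$-contribution with $\C G_r(\psi)(f)$ via convexity of $D_r$ and compatibility of the sections — is the same as the paper's. Where you diverge is the vanishing on the complement. The paper argues algebraically and uniformly in $p$: using the fact that $h_x$ is a composition of disjointly supported point-pushes, the braid $\gamma(i_r(f)^p,x)$ factors as $\alpha_{1}\circ\gamma'_{p}\circ\alpha_{2}$, where $\alpha_1,\alpha_2$ have word length bounded independently of $p$ (they push the strands whose coordinates lie outside $D_r$ out along their geodesics) and $\gamma'_{p}$ lies in $inc(\B B_{n-1})$, a subgroup consisting of reducible (or trivial) elements; since $\psi$ vanishes there, $|\psi(\gamma(i_r(f)^p,x))|$ is bounded uniformly in $p$ and the limit is zero. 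Your recurrence argument is a genuinely different, dynamical route, and for $2\le k'\le n-1$ it is correct; note however that vanishing along a subsequence yields $\overline\Psi(x)=0$ only because the full limit is assumed to exist a.e., whereas the paper's uniform bound gives convergence of the whole sequence with no such input.

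The genuine gap is the case $k'=1$. At that point the only thing you have established is that, along the recurrence subsequence, $\gamma(i_r(f)^{p_j},x)$ preserves the isotopy class of $c=h_x^{-1}(\partial D_r)$, a curve enclosing exactly one puncture. That information is vacuous: in the $n$-punctured disc all simple closed curves enclosing only $z_1$ are isotopic, so \emph{every} pure mapping class preserves this isotopy class. In particular it does not follow that the class is ``carried by a sub-disc containing at most one puncture'': a pure mapping class preserving such a curve can perfectly well be pseudo-Anosov — for instance the point-push of $z_1$ along a loop filling $D^2\setminus\{z_2,\ldots,z_n\}$ (Kra's theorem) — and on such a class $\psi$ has no reason to vanish. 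Your conclusion is true, but proving it requires precisely the factorization you skipped: one must exhibit $\gamma(i_r(f)^{p},x)$ as a conjugate, by the $p$-independent braid moving the outside strands from their basepoints out to their coordinates, of the braid in which the single inside strand traces a loop contained in $D_r$ while all other strands sit still outside $D_r$; that loop is null-homotopic in the complement of the other strands because $D_r$ is simply connected and disjoint from them, so the class is trivial. (Your $k'=0$ case is fine as stated, and needs no recurrence.) Once this conjugation structure is in hand it handles every $k'\le n-1$ at once — the middle factor preserves $\partial D_r$, hence is trivial or reducible — which is exactly the paper's proof and renders the recurrence and invariant-curve machinery unnecessary.
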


\begin{proof}
Denote by $X_{n,r}$ the space of all ordered $n$-tuples of
distinct points in $D_r$. 
Let $\psi\in Q_{BF}(\B B_n)$ and $f\in\OP{Diff}(D_r,\OP{area})$.
We have
\begin{align*} 
\C G(\psi)(i_r(f))&=\lim_{p\to\infty}\left(\int\limits_{X_{n,r}}
\frac{\psi(\gamma(f^p;x))}{p}\thinspace dx+
\int\limits_{X_n\setminus X_{n,r}}\frac{\psi(\gamma(f^p;x))}{p}\thinspace dx\right)\\
&=\C G_r(\psi)(f)+\int\limits_{X_n\setminus X_{n,r}}
\lim_{p\to\infty}\frac{\psi(\gamma(f^p;x))}{p}\thinspace dx\thinspace.
\end{align*}
Let $inc\colon\B B_{n-1}\to\B B_n$ 
be the standard inclusion of $\B B_{n-1}$ into $\B B_n$.
Recall that by definition $i_r(f)$ is the identity on $D^2\setminus D_r$.
It follows that for each $x\in X_n\setminus X_{n,r}$ the braid
$$\gamma(f^p;x)=\alpha_{1,p,x}\circ\gamma'_{f^p,x}\circ\alpha_{2,p,x},$$
where the braid $\gamma'_{f^p,x}\in inc(\B B_{n-1})$ and the word
length of the braids
$\alpha_{1,p,x}$ and $\alpha_{2,p,x}$ is bounded for all $p$ and $x$. 
Hence for each $x\in X_n\setminus X_{n,r}$ we have
$$
\lim_{p\to\infty}\frac{\psi(\gamma(f^p;x))}{p}=
\lim_{p\to\infty}\frac{\psi(\gamma'_{f^p,x})}{p}=0,
$$
where the last equality follows from the fact that $inc(\B B_{n-1})$ is
reducible in $\B B_n$ and every quasimorphism in $Q_{\OP{BF}}(\B B_n)$
vanishes on reducible elements. This finishes the proof of the lemma.
\end{proof}

Let us continue the proof.
It follows from Theorem \ref{T:main} that the subspace
$$\C G_r(Q_{\OP{BF}}(\B B_n))<Q(\OP{Diff}(D_r,\OP{area}))$$ 
is infinite dimensional for $n\geq 4$ and that every quasimorphism in this space 
vanishes on the set of entropy-zero diffeomorphisms. It follows from \cite[Lemma 3.10]{MR3044593}
that there exist 
$\{\Psi_{i,n,r}\}_{i=1}^m$ in $\C G_r(Q_{\OP{BF}}(\B B_n))$
and $\{f_{i,n,r}\}_{i=1}^m$ in $\OP{Diff}(D_r,\OP{area})$ such that 
$$\Psi_{i,n,r}(f_{j,n,r})=\delta_{ij},$$
where $\delta_{ij}$ is the Kronecker delta.

Set $f_i:=i_r(f_{i,n,r})$. It follows from Lemma \ref{L:Ext} that 
$\Psi_{i,n}(f_j)=\delta_{ij}$, where $\Psi_{i,n}\in\C G(Q_{\OP{BF}}(\B B_n))$
and it is defined using the same quasimorphism from $Q_{\OP{BF}}(\B B_n)$ as
$\Psi_{i,n,r}$. Each $f_j$ is supported in $D_r$.
Since $r<\frac{1}{m}$ there exists a family of diffeomorphisms 
$\{h_i\}_{i=1}^m\in\OP{Diff}(D^2,\OP{area})$
such that $h_i\circ f_i\circ h_i^{-1}$ and $h_j\circ f_j\circ h_j^{-1}$ 
have disjoint supports for $i\neq j$.
Denote by $\hat{f}_i:=h_i\circ f_i\circ h_i^{-1}$ and let
$$J\colon\B Z^m\to\OP{Diff}(D^2,\OP{area}),$$
where 
$$J(k_1,\ldots,k_m)=\hat{f}_1^{k_1}\ldots\hat{f}_m^{k_m}.$$
It is clear that this map is a monomorphism. We prove it is bi-Lipschitz.
Since all $\hat{f}_i$ commute with each other and $\Psi_{i,n}(\hat{f}_j)=\delta_{ij}$,
we obtain
$$
\|\hat{f}_1^{k_1}\circ\ldots\circ\hat{f}_m^{k_m}\|_{\OP{Ent}}\geq\frac{|\Psi_{i,n}(\hat{f}_1^{k_1}
\circ\ldots\circ\hat{f}_m^{k_m})|}{D_{\Psi_{i,n}}}=\frac{|k_i|}{D_{\Psi_{i,n}}}\thinspace,
$$
where $D_{\Psi_{i,n}}$ is the defect of the quasimorphism
$\Psi_{i,n}$. We denote by $\mathfrak{D}_m:=\max\limits_i D_{\Psi_{i,n}}$
and obtain the following inequality
$$
\|\hat{f}_1^{k_1}\circ\ldots\circ\hat{f}_m^{k_m}\|_{\OP{Ent}}
\geq(m\cdot\mathfrak{D}_m)^{-1}\sum_{i=1}^m |k_i|\thinspace .
$$
Denote by $\mathfrak{M}_J:=\max\limits_i\|\hat{f}_i\|_{\OP{Ent}}$. 
Now we have the following inequality
$$
\|\hat{f}_1^{k_1}\circ\ldots\circ \hat{f}_m^{k_m}\|_{\OP{Ent}}
\leq\sum_{i=1}^m |k_i|\cdot\|\hat{f}_i\|_{\OP{Ent}}
\leq\mathfrak{M}_J\cdot\sum_{i=1}^m |k_i|\thinspace .
$$
Last two inequalities conclude the proof of the theorem. \qed
   
\section{Final remarks}\label{S:Rem}
\begin{enumerate}
\item Let $S$ be a compact oriented surface of positive genus and $n$ such that $S$ with
$n$ punctures is non-sporadic. Since each quasimorphism in $Q_{BF}(\OP{MCG}(S,n))$ vanishes
on reducible elements, the induced quasimorphism on $\OP{Ham}(S)$
vanishes on diffeomorphisms supported in a disc. 
Hence every quasimorphism that lies in the image of 
$\C G^0_{S,n}$ is $C^0$-continuous, see \cite[Theorem 1.7]{MR2884036}.\\
\item Let $S$ be a compact oriented surface. One can easily show 
that there is an infinite family of egg-beater 
Hamiltonian diffeomorphisms $\{f_i\}_{i=1}^\infty$ of $S$
(for definition see \cite{MR3437837}), and a family of linearly independent quasimorphisms
$\Psi_i\in Q(\OP{Diff}(S,\OP{area}))$ which are Lipschitz with respect to the topological
entropy such that $\Psi_i(f_i)\neq 0$. This implies that each $f_i$ has a positive topological entropy.\\
\item 
Since every autonomous diffeomorphism of a surface has zero entropy, 
entropy norm is bounded from above by the autonomous norm. It would be interesting
to know whether these norms are equivalent. Note that the existence of a homogeneous 
quasimorphism on $\OP{Ham}(S)$ which does not vanish on the set of entropy-zero diffeomorphisms,
but vanishes on every autonomous diffeomorphism, would imply that
these norms are not equivalent.
\end{enumerate}

%\nocite{*}
\bibliography{bibliography}
\bibliographystyle{plain}

\end{document}